\newif\ifLONGVER 
\newif\ifSTANDALONE
\tikzset{
->>,
>=stealth,
node distance=5cm,
every state/.style={thick, fill=gray!10},
initial text=$\blacksquare$,
every edge/.append style={font=\footnotesize},
initial distance=0.5cm
}
\newtheoremstyle{breakplain}
  {}
  {}
  {\itshape}
  {}
  {\bfseries}
  {.}
  {\newline}
  {}
\newtheoremstyle{breakdefinition}
  {}
  {}
  {}
  {}
  {\bfseries}
  {.}
  {\newline}
  {}
\theoremstyle{breakdefinition}
\newtheorem{definition}{Definition}[section]
\theoremstyle{remark}
\newtheorem{remark}[definition]{Remark}
\theoremstyle{remark}
\newtheorem{observation}[definition]{Observation}
\theoremstyle{remark}
\theoremstyle{breakplain}
\newtheorem{lemma}[definition]{Lemma}
\theoremstyle{breakdefinition}
\newtheorem{corollary}[definition]{Corollary}
\theoremstyle{breakplain}
\newtheorem{theorem}[definition]{Theorem}
\theoremstyle{breakplain}
\newtheorem*{theorem-no}{Theorem}
\theoremstyle{breakplain}
\newtheorem{proposition}[definition]{Proposition}
\theoremstyle{remark}
\theoremstyle{remark}
\newtheorem{notation}[definition]{Notation}
\theoremstyle{remark}
\newtheorem*{notation-no}{Notation}
\theoremstyle{breakdefinition}
\theoremstyle{breakdefinition}
\theoremstyle{breakdefinition}
\theoremstyle{breakplain}
\theoremstyle{breakplain}
\theoremstyle{breakdefinition}
\newcommand{\lang}[1]{\mscr{L}_{#1}}
\newcommand{\bb}{\mathbb}
\newcommand{\mcal}{\mathcal}
\newcommand{\mscr}{\mathscr}
\newcommand{\mfrak}{\mathfrak}
\newcommand{\pow}{\mcal{P}}
\newcommand{\wo}{\setminus}
\newcommand{\msotimes}{\otimes}
\newcommand{\upperRomannumeral}[1]{\uppercase\expandafter{\romannumeral#1}}
\newcommand{\Rmn}[1]{\expandafter\@slowromancap\romannumeral #1@}
\newcommand{\oset}[3][0ex]{%
  \mathrel{\mathop{#3}\limits^{
    \vbox to#1{\kern-2\ex@
    \hbox{$\scriptstyle#2$}\vss}}}}
\newcommand{\exle}{\oset[0ex]{\scriptscriptstyle \exists}{<}}
\newcommand{\efi}{\forall}
\newcommand{\efii}{\exists}
\newcommand{\lift}[1]{\, \hat{#1} \,}
\DeclarePairedDelimiter\dvert{\lvert}{\rvert}
\DeclarePairedDelimiter\dangle{\langle}{\rangle}
\DeclareMathOperator{\Th}{Th}
\DeclareMathOperator{\At}{At}
\DeclareMathOperator{\Fin}{Fin}
\DeclareMathOperator{\Clop}{Clop}
\DeclareMathOperator{\Comp}{Comp}
\DeclareMathOperator{\MO}{MO}
\DeclareMathOperator{\MSO}{M}
\DeclareMathOperator{\up}{up}
\DeclareMathOperator{\LT}{LT}
\DeclareMathOperator{\FSpec}{SPEC}
\DeclareMathOperator{\base}{base}
\DeclareMathOperator{\EF}{EF}
\DeclareMathOperator{\dwcl}{dwcl}
\DeclareMathOperator{\LO}{LO}
\DeclareMathOperator{\Sch}{Sch}
\setlist[enumerate,itemize]{itemsep=1pt,topsep=0pt}
\title{The Pseudofinite Monadic Second Order Theory of Linear Order.}
\author{Deacon Linkhorn}
\begin{document}
\maketitle
\tableofcontents
\clearpage
\section{Introduction.}

This note covers several results about linear orders within the framework of monadic second order logic. In particular the pseudofinite monadic second order theory of linear order. In monadic second order logic we have all the expressive power of first order logic, as well as the ability to quantify over unary relations. To capture monadic second order logic we use a one sorted first order setup which we fully describe in \cref{SecPrelim}, this setup allows for direct use of classical model theoretical techniques and results, such as Ehrenfeucht-Fra\"isse games, and the compactness and completeness theorems for first order logic. 

In \cite{BSCountableOrd} the monadic second order theory of countable ordinals was studied in great depth, building on and consolidating results from \cite{BSAxOmega1}. There, among other results, an axiomatisation of the shared monadic second order theory of countable ordinals is given, as well as a classification of its complete extensions. We investigate what happens when we restrict attention to finite linear orders. Our \cref{TMFinMainTheorem} establishes the correctness of our recursive axiomatisation $T_{\MSO(\Fin)}$ (\cref{DefTMFin}) of the shared monadic second order theory of finite linear orders, i.e. the \emph{pseudofinite monadic second order theory of linear order}. The axiomatisation $T_{\MSO(\Fin)}$ comprises axioms for an atomic Boolean algebra in which all (parametrically) definable sets of atoms are witnessed by elements of the Boolean algebra, having a discrete linear order on the atoms, and such that every non-bottom element of the Boolean algebra contains a smallest and largest atom. In \cref{DefTBasePlus} we give a binary operation $\msotimes$ on a suitable class of structures, which restricts to a well defined binary operation on the set of completions of this shared theory generalising the concatenation of finite linear orders (viewed as monadic second order structures). This operation plays a crucial role in \cref{SecProfinite}.

We then classify the completions of this shared theory in \cref{SecCompletions}, using what we call residue functions (\cref{DefResFunc}). For a finite model the completion is given by appending a sentence saying the Boolean algebra contains a particular number of atoms, for an infinite model we must say what the residue is when we divide the underlying linear order by natural numbers. This is made rigorous by considering minimal subsets of the linear order closed under the $n$-th power of the successor and predecessor functions for each natural number $n$. There are precisely $n$ such sets in each infinite model and they form a partition of the atoms by definable elements. By considering which of these sets contain the smallest and greatest atoms, we get a notion of residue modulo $n$ for infinite models. In \cref{ThmTMFinCompletions} we prove that each residue function (sequence of residues satisfying certain conditions) induces a complete extension of $T_{\MSO(\Fin)}$, and that every completion which admits an infinite model is of this form. 

In \cref{SecProfinite} we explore a connection between the pseudofinite monadic second order theory of linear order and profinite algebra. The model-theoretical analysis of this theory is used to give a new perspective on the free profinite monoid on one generator\footnote{See \cite{AlmeidaProfinBook} subsection 4.4 for an algebraic introduction to this structure and its abstract properties.}, this follows in the footsteps of \cite{vGSteinberg} where model-theoretical analysis of the shared \emph{first order} theory of words\footnote{Given a finite non-empty set $\Sigma$, a $\Sigma$-word is a linear order $\alpha$ together with a function $\alpha \rightarrow \Sigma$, this can be viewed as a first order structure by viewing the preimage of each $\sigma \in \Sigma$ as a unary relation on $\alpha$.} was used to gain new insights on free pro-aperiodic monoids. In both cases the central idea is to view elements of the respective profinite monoids as $0$-types (i.e. completions) of the respective theories. The bridge connecting the free profinite monoid on one generator and the pseudofinite monadic second order theory of linear order is extended Stone duality (see \cref{ExtStoneDuality}), in particular Theorem 6.1 from \cite{GGP} which says that the extended Stone dual of the free profinite monoid on one generator is the Boolean algebra of regular languages over a singleton alphabet. Our \cref{ThmS0isFPM} exploits this to show that the free profinite monoid on one generator is given by the space of completions of the pseudofinite monadic second order theory together with the binary operation $\msotimes$. 

\ \\
\textbf{Acknowledgements.} I would like to thank Sam van Gool for many helpful discussions and comments on various iterations of the note. 

\clearpage
\section{Preliminaries, the base theory \texorpdfstring{$T_{\base}$}{Tbase}.}\label{SecPrelim}

\begin{definition}
Define $\lang{\MO} = \{\subseteq,<\}$ to be the language comprising two binary relation symbols. For each linear order $(\alpha,<)$ we define an $\lang{\MO}$-structure $\MSO(\alpha)$ by taking,
\begin{enumerate}
\item the universe of $\MSO(\alpha)$ to be $\pow(\alpha)$, the powerset of $\alpha$,
\item $\subseteq$ is the usual set-theoretic inclusion relation,
\item $<$ is interpreted as the ordering of $\alpha$, identifying $i \in \alpha$ with $\{i\} \in \pow(\alpha)$. 
\end{enumerate}
We call the $\lang{\MO}$-structure $\MSO(\alpha)$ the \textbf{monadic second order version of $\alpha$}.
\end{definition}

\begin{observation}\label{BaseTheoryObs}
Let $\alpha$ be any linear order, then the following are all easily seen to be true of the $\lang{\MO}$-structure $\MSO(\alpha)$,
\begin{enumerate}
\item $\MSO(\alpha)$ is an atomic Boolean algebra,
\item $<$ is a linear ordering on the atoms of this Boolean algebra,
\item every subset of $\alpha$ is present in $\MSO(\alpha)$, and therefore a fortiori every $\lang{\MO}$-definable subset of $\alpha$ (taken as a set of singleton subsets of $\alpha$) is present in $\MSO(\alpha)$. We refer to this phenomenon as comprehension.
\end{enumerate}
\end{observation}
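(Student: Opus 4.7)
The plan is simply to unpack the definition of $\MSO(\alpha)$ and verify each clause directly, as the observation is really a sanity check on the setup rather than a substantial claim. I will handle the three items in the order given.

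For the first item, I would note that the universe of $\MSO(\alpha)$ is $\pow(\alpha)$ and the relation symbol $\subseteq$ is interpreted as genuine set-theoretic inclusion, so $\MSO(\alpha)$ as a $\{\subseteq\}$-reduct is exactly the powerset Boolean algebra on $\alpha$. Thus it is a Boolean algebra in the usual sense, with bottom $\emptyset$, top $\alpha$, and the atoms of this Boolean algebra are precisely the singleton subsets $\{i\}$ for $i \in \alpha$. Atomicity follows because any nonempty $X \subseteq \alpha$ contains some $i \in X$, and then $\{i\} \subseteq X$ is an atom below $X$.

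For the second item, the identification $i \mapsto \{i\}$ provides a bijection between $\alpha$ and the set of atoms of $\MSO(\alpha)$, and by the third clause of the definition the symbol $<$ is interpreted as the pullback of the ordering of $\alpha$ along this bijection. Hence $<$ is a linear order on the atoms (and is left undefined, or irrelevant, on non-atoms — this is just the convention fixed in the definition).

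For the third item, there is essentially nothing to prove: the universe of $\MSO(\alpha)$ is by definition all of $\pow(\alpha)$, so every subset of $\alpha$ occurs as an element. The comment about definable sets is then immediate: if $\varphi(x, \bar{y})$ is any $\lang{\MO}$-formula and $\bar{a}$ parameters from $\MSO(\alpha)$, then the set of $i \in \alpha$ such that $\MSO(\alpha) \models \varphi(\{i\}, \bar{a})$ is a subset of $\alpha$, and so, viewed as a subset of $\alpha$ (equivalently, as the join in the Boolean algebra of the corresponding singletons), it is witnessed by an element of the universe. No step here is a genuine obstacle; the only thing to be careful about is the identification of atoms with elements of $\alpha$, which is built into the definition and makes clauses (2) and (3) immediate once (1) is observed.
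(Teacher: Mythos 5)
Your verification is correct and matches the paper's (implicit) treatment: the paper states this as an Observation with no proof, regarding all three items as immediate from the definition of $\MSO(\alpha)$ as the full powerset algebra with $<$ pulled back along $i \mapsto \{i\}$, which is exactly what you unpack. Nothing is missing; in particular your note that comprehension for definable sets is a special case of having \emph{all} subsets present is precisely the ``a fortiori'' the paper intends.
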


We will use the above observation to put together a base theory over which we will work, but will only do so after introducing the first order language $\lang{\msotimes}$. 

\begin{notation}\label{MSONotation}
When working with structures $\MSO(\alpha)$ it is natural to distinguish between singletons/atoms and general elements. One way to do this is to work with a two-sorted language and replace $\MSO(\alpha)$ with the two-sorted structure $(\pow(\alpha),\alpha;\in,<)$. We prefer to persevere with a one-sorted language and adopt the convention that uppercase variables are used to quantify over the universe of $\MSO(\alpha)$, while lower case variables are used to quantify over the (uniformly) definable set of atoms. We also use the formula $X(x)$ as shorthand for $x \subseteq X$. 

To demonstrate the utility of this approach, consider the following easily readable sentence,
\[
\forall x \exists Y \forall z (Y(z) \leftrightarrow z<x).
\]
For each linear order $\alpha$ we can easily understand,
\[
\MSO(\alpha) \models \forall x \exists Y \forall z (Y(z) \leftrightarrow z<x),
\]
as saying that the strict downset of any atom is present in $\MSO(\alpha)$. Now instead consider the same formula but written without notational shorthands,
\[
\forall X (\At(X) \rightarrow (\exists Y (\forall Z (\At(Z) \rightarrow (Z \subseteq Y \leftrightarrow Z<X))))),
\]
where $\At(X)$ is the formula,
\[
\exists W (W \subseteq X \wedge W \neq X \wedge \forall U (U \subseteq X \wedge U \neq X \rightarrow U = W)).
\]
The above shorthands make sense whenever we work with atomic Boolean algebras. 
\end{notation}

\begin{remark}
The properties given in \cref{BaseTheoryObs} are easily seen to be expressible using $\lang{\MO}$. Being an atomic Boolean algebra with atoms linearly ordered is expressible using a single $\lang{\MO}$-sentence, for comprehension we must use an axiom schema (see \cref{DefComprehension} for details). Picking up on \cref{MSONotation}, it is worth mentioning that in the two-sorted set up in which the monadic second order version of a first order structure is just a two-sorted first order structure, the comprehension axioms are often subsumed into the semantics. When this is done the resulting semantics is the so-called `Henkin semantics'. For more details see section 9 of \cite{SEPHOL}.
\end{remark}

\begin{definition}
We define $\lang{\msotimes} = \{\subseteq,\bot,\At,\exle\}$ to be the first order language in which $\subseteq$ and $\exle$ are binary relation symbols, $\bot$ is a unary constant symbol, and $\At$ is a unary relation symbol. For each linear order $\alpha$ we define an $\lang{\msotimes}$-structure $\widetilde{\MSO}(\alpha)$ as follows,
\begin{enumerate}
\item the universe is $\pow(\alpha)$,
\item $\subseteq$ is interpreted as set-theoretic inclusion,
\item $\bot$ is interpreted as $\emptyset$,
\item $\At$ is interpreted as the collection of singleton subsets,
\item $\exle$ is interpreted by taking, for each $A,B \in \pow(\alpha)$,
\[
A \exle B \Longleftrightarrow \exists a \in A \exists b \in B (a<b),
\]
where $a<b$ refers to the linear ordering of $\alpha$.
\end{enumerate}
\end{definition}

\begin{lemma}
For each linear order $\alpha$, the structures $\MSO(\alpha)$ and $\widetilde{\MSO}(\alpha)$ are extensions by definitions of one another.
\end{lemma}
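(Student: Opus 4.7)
The plan is to exhibit, in each direction, definitional extensions witnessing the claim. Both structures share the universe $\pow(\alpha)$ and the symbol $\subseteq$, so it suffices to define the non-shared symbols of each in the language of the other and to verify the definitions give the correct interpretations.

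First I would expand $\MSO(\alpha)$ to an $\lang{\msotimes}$-structure by defining the three extra symbols using $\subseteq$ and $<$. Since by \cref{BaseTheoryObs} the structure $\MSO(\alpha)$ is an atomic Boolean algebra, I take $\bot$ to denote the unique $X$ with $\forall Y (X \subseteq Y)$, and I take
\[
\At(X) \defeq \exists Y (Y \subseteq X \wedge Y \neq X) \wedge \forall Y_1 Y_2 \bigl( (Y_1 \subseteq X \wedge Y_1 \neq X \wedge Y_2 \subseteq X \wedge Y_2 \neq X) \rightarrow Y_1 = Y_2 \bigr),
\]
which correctly picks out the atoms: a non-bottom $X$ whose only proper subset is $\bot$. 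Finally I define $A \exle B$ by the $\lang{\MO}$-formula asserting existence of atoms $a \subseteq A$ and $b \subseteq B$ with $a < b$. Under the identification of atoms of $\MSO(\alpha)$ with elements of $\alpha$ and the interpretation of $<$ on singletons as the ordering of $\alpha$, each of these formulas defines the intended interpretation in $\widetilde{\MSO}(\alpha)$ verbatim.

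Conversely, I would expand $\widetilde{\MSO}(\alpha)$ to an $\lang{\MO}$-structure by defining
\[
A < B \defeq \At(A) \wedge \At(B) \wedge A \exle B.
\]
This is correct because by construction of $\widetilde{\MSO}(\alpha)$, for singletons $\{a\}$ and $\{b\}$ the relation $\{a\} \exle \{b\}$ holds exactly when $a < b$ in $\alpha$, matching the interpretation of $<$ in $\MSO(\alpha)$ (which is false off the atoms).

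The argument is essentially bookkeeping: since both expansions return structures with the same universe and since each defined symbol matches its intended interpretation on the nose, no genuine obstacle arises. The only point requiring a line of care is the observation that $<$ in $\MSO(\alpha)$ is trivially false on non-atoms, so that the clause $\At(A) \wedge \At(B)$ in the definition of $<$ is needed and correct; and dually, that the existential in the definition of $\exle$ correctly recovers the coarser relation from its restriction to atoms.
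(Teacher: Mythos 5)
Your proposal is correct and follows essentially the same route as the paper: both directions are handled by exhibiting explicit defining formulas for $\bot$, $\At$, and $\exle$ in $\lang{\MO}$ and for $<$ in $\lang{\msotimes}$ (the paper's formula for $\exle$ omits the explicit restriction to atoms since $<$ is already false off the singletons, but this is an immaterial difference). No gaps.
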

\begin{proof}
Let $\alpha$ be a linear order. We have already seen that the collection of atoms of $\MSO(\alpha)$ is defined by the $\lang{\MO}$-formula,
\[
\At(X): \forall W (W \subseteq X \rightarrow (W=X \vee \forall U (U \subseteq W\rightarrow W=U))).
\]
Clearly the bottom element $\bot$ is the unique element satisfying,
\[
\bot(X): \forall Y (X \subseteq Y),
\]
and $\exle$ is defined by the formula,
\[
X \exle Y: \exists W \exists V (W \subseteq X \wedge V \subseteq Y \wedge W<V).
\]

Going in the other direction, $<$ is definable in $\widetilde{\MSO}(\alpha)$ by the formula,
\[
X < Y: \At(X) \wedge \At(Y) \wedge X \exle Y.
\]
So we are done. 
\end{proof}

\begin{remark}
We use $\lang{\msotimes}$ rather than $\lang{\MO}$ for technical reasons, most notably to allow for an easy presentation of \cref{SumEFThm} (if we work with $\lang{\MO}$ rather than $\lang{\msotimes}$ the theorem as stated is false). 
\end{remark}

\begin{definition}\label{DefComprehension}
For each $\lang{\msotimes}$-formula $\eta(x;\bar{Y})$, $\Comp_{\eta}$ is defined to be the following $\lang{\msotimes}$-sentence,
\[
\forall \bar{Y} \exists X (\forall x (X(x) \leftrightarrow \eta(x;\bar{Y}))).
\]
This is a comprehension/separation axiom, it says the set of atoms defined by $\eta$ (over any fixed tuple of parameters $\bar{Y}$) is present as a single element. 
\end{definition}

\begin{definition}
We define $T_{\base}$ to be the following $\lang{\msotimes}$-theory,
\begin{enumerate}
\item atomic Boolean algebra under $\subseteq$,
\item atoms are linearly ordered by $\exle$,
\item $\bot$ is the bottom element of this Boolean algebra (i.e. $\bot$ is the smallest element with respect to $\subseteq$),
\item $\At$ holds precisely for the atoms of the Boolean algebra (i.e. the minimal elements with respect to $\subseteq$ after excluding $\bot$),
\item the restriction of $\exle$ to the atoms determines the whole relation in a way analogous to the standard models, meaning more precisely that,
\[
\forall X \forall Y (X \exle Y \leftrightarrow \exists x \exists y (X(x) \wedge Y(y) \wedge x \exle y)),
\]
\item $\Comp_{\eta}$ for each $\lang{\msotimes}$-formula $\eta(x;\bar{Y})$.
\end{enumerate}
Note that this is just the base theory described in \cref{BaseTheoryObs}, given for $\lang{\msotimes}$ rather than $\lang{\MO}$.
\end{definition}

\begin{remark}
There are $\lang{\msotimes}$-structures which are models of $T_{\base}$ but are not isomorphic to $\MSO(\alpha)$ for any linear order $\alpha$. Let $\alpha$ be any countably infinite linear order and consider $\MSO(\alpha)$, as we have fit everything into a one-sorted first order framework it is crystal clear that we can make use of the L\"owenheim-Skolem theorem. Therefore we are free to take a countable elementary substructure $M \prec \MSO(\alpha)$. Such an elementary substructure comprises a countable Boolean algebra whose atoms form a countably infinite linear order $(\At(M),\exle)$. Therefore $M$ cannot be isomorphic to $\MSO((\At(M),\exle))$ for cardinality reasons.
\end{remark}

\begin{remark}
We want to include $\MSO(\emptyset)$, the monadic second order version of the empty linear order. In \cref{SecProfinite} the theory of $\MSO(\emptyset)$ will serve as the identity element in a monoid. For this reason, when we say Boolean algebra we do not exclude the degenerate $1$ element case in which $\bot = \top$. 
\end{remark}

\begin{remark}
Shelah (\cite{ShelahMTO} Theorem 7 assuming ZFC+CH, CH was later removed) has shown that the monadic second order theory of linear order\footnote{By which is meant $\bigcap_{\alpha \in \LO} \Th(\MSO(\alpha))$ where $\LO$ is the class of linear orders.} is undecidable and therefore $T_{\base}$, which is clearly a recursive set of axioms, cannot serve as an axiomatisation. In particular there must exist $\lang{\msotimes}$-sentences $\phi$ such that $\MSO(\alpha) \models \phi$ for every linear order $\alpha$, while $T_{\base} \not\models \phi$. It is also worth noting that our \cref{SumEFThm} is philosophically downstream from the composition theorems commonly used in model-theoretical analysis of monadic second order theories of linear order, see for example  
\end{remark}

\begin{definition}\label{ResRelDef}
Let $M \models T_{\base}$ and let $A \in M$. Then the structure $M \upharpoonright A$ is the substructure of $M$ with universe the set defined by the formula $X \subseteq A$ in $M$.
\end{definition}

\begin{proposition}\label{ResRelProp}
Let $\phi$ be an $\lang{\msotimes}$-sentence. Then there is an $\lang{\msotimes}$-formula $\phi^{X}$ such that for all $M \models T_{\base}$ and $A \in M$ we have,
\[
M \models \phi^{A} \Longleftrightarrow M \upharpoonright A \models \phi.
\]
\end{proposition}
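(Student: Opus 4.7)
The plan is to define $\phi^X$ by induction on the complexity of $\phi$, using the standard relativization construction, and then verify by a parallel induction that it captures satisfaction in $M \upharpoonright A$. Since $\lang{\msotimes}$ contains no function symbols, the substructure $M \upharpoonright A$ is a well-defined $\lang{\msotimes}$-structure: its universe $\{B \in M : B \subseteq A\}$ contains the constant $\bot$ (since $\bot \subseteq A$ holds in any model of $T_{\base}$), and the relations $\subseteq$, $\exle$, $\At$ are simply restricted from $M$.

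First I would, without loss of generality, assume that the variable $X$ does not appear in $\phi$ (renaming bound variables if necessary). Then I would recursively define, for each $\lang{\msotimes}$-formula $\psi$ whose free variables do not include $X$, a formula $\psi^X$ as follows: atomic formulas are left unchanged; Boolean connectives commute with the operation; and for quantifiers I set
\[
(\exists Y \, \psi)^X := \exists Y \, (Y \subseteq X \wedge \psi^X), \qquad (\forall Y \, \psi)^X := \forall Y \, (Y \subseteq X \rightarrow \psi^X).
\]
This is well defined because the bound variable $Y$ may be chosen fresh from $X$.

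The main step is then to prove, by induction on $\psi$, the strengthened claim that for every $M \models T_{\base}$, every $A \in M$, and every tuple $\bar{B}$ from $M$ with each $B_i \subseteq A$,
\[
M \models \psi^{A}(\bar{B}) \iff M \upharpoonright A \models \psi(\bar{B}).
\]
The atomic case is immediate since the relations of $M \upharpoonright A$ are the restrictions of those of $M$ and the constant $\bot$ is interpreted identically in the two structures. The Boolean connective cases are trivial. For the quantifier cases, the equivalence
\[
M \upharpoonright A \models \exists Y \, \psi(Y, \bar{B}) \iff \exists C \in M \, (C \subseteq A \text{ and } M \upharpoonright A \models \psi(C, \bar{B}))
\]
combines with the inductive hypothesis to yield exactly $M \models \exists Y (Y \subseteq A \wedge \psi^A(Y, \bar{B}))$; the universal case is dual. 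Specialising to the sentence $\phi$ (with no free variables) gives the statement. No step presents a genuine obstacle; the only point requiring care is the bookkeeping to keep $X$ free from name-clashes with the bound variables of $\phi$, which is handled by the initial renaming.
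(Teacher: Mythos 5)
Your proof is correct and is exactly the argument the paper has in mind: the paper's own proof simply says ``this is a straightforward exercise in relativising quantifiers,'' and you have carried out that exercise, with the standard induction on formula complexity and the observation that $M \upharpoonright A$ is a genuine substructure containing $\bot$.
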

\begin{proof}
This is a straightforward exercise in relativising quantifiers.
\end{proof}

\begin{remark}\label{ResRelRemark}
We can play around with the above proposition. Here is an example which will be used in \cref{SecProfinite}. Consider the $\lang{\msotimes}$-formula,
\[
\dwcl(X): \forall x \forall y (X(x) \wedge x \exle y \rightarrow X(y)),
\]
in any model of $T_{\base}$ it is easy to see this defines precisely the set of downwards closed elements with respect to the linear ordering of the atoms. Now for any $\lang{\msotimes}$-sentence $\phi$ we can produce a sentence,
\[
\exists X (\dwcl(X) \wedge \phi^X),
\]
and this is true in a model $M$ precisely if there is $A \in M$ such that $A$ is downwards closed and $M \upharpoonright A \models \phi$. 
\end{remark}

\begin{definition}
Recall that if $(\alpha,<_{\alpha})$ and $(\beta,<_{\beta})$ are linear orders, the sum $\alpha + \beta$ is defined to be the linear order obtained by placing $\beta$ `after' $\alpha$ so that each element of $\alpha$ is strictly less than each element of $\beta$.\footnote{Formally this is the linear order with underlying set $\alpha \sqcup \beta$ and with $<_{\alpha + \beta} = <_{\alpha} \cup <_{\beta} \cup (\alpha \times \beta)$.} 
\end{definition}

\begin{remark}
In the following definition we will use the symbol $\msotimes$ to denote an operation on the class of models of $T_{\base}$. 
This operation can be thought of as taking the direct product of Boolean algebras and sum of linear orders at the same time.
\end{remark}

\begin{definition}\label{DefTBasePlus}
Let $M,N \models T_{\base}$. We define an $\lang{\msotimes}$-structure $M \msotimes N$ as follows,
\begin{enumerate}
\item the universe of the structure is $\dvert{M} \times \dvert{N}$,
\item $\subseteq$ is defined as it is in the direct product, i.e. for all $(A,B),(C,D) \in M \msotimes N$ we declare $M \msotimes N \models (A,B) \subseteq (C,D)$ iff $M \models A \subseteq C$ and $N \models B \subseteq D$,
\item $\bot$ and $\At$ are, respectively, the bottom element and set of atoms for the resulting atomic Boolean algebra $(M \msotimes N,\subseteq)$,
\item $\exle$ is given  by $M \msotimes N \models (A,B) \exle (C,D)$ if and only if 
\[
M \not\models A = \bot \text{ and } N \not\models D = \bot, \text{ or }  M \models A \exle C, \text{ or }N \models B \exle D.
\]
\end{enumerate}
\end{definition}

\begin{observation}\label{PlusExtendsStandard}
For any linear orders $\alpha$ and $\beta$ we get $\MSO(\alpha) \msotimes \MSO(\beta) \cong \MSO(\alpha + \beta)$. \Cref{DefTBasePlus} is an extension of this to other models of $T_{\base}$.
\end{observation}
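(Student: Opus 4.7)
The plan is to exhibit the natural bijection and then verify, symbol by symbol, that it is an isomorphism of $\lang{\msotimes}$-structures.

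First, identify $\alpha+\beta$ with the disjoint union $\alpha \sqcup \beta$ (this is exactly how the sum of linear orders is defined). Define
\[
\Phi \colon \pow(\alpha) \times \pow(\beta) \longrightarrow \pow(\alpha+\beta), \qquad \Phi(A,B) = A \cup B,
\]
which is a bijection since any subset of the disjoint union is uniquely determined by its intersections with $\alpha$ and $\beta$. I would then check preservation of each symbol of $\lang{\msotimes}$: For $\subseteq$, observe $(A,B) \subseteq (C,D)$ in the product iff $A \subseteq C$ and $B \subseteq D$ iff $A \cup B \subseteq C \cup D$. For $\bot$, note that $(\emptyset,\emptyset)$, which is the bottom of the product Boolean algebra by \cref{DefTBasePlus}, maps to $\emptyset$, the bottom of $\MSO(\alpha+\beta)$. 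For $\At$, the atoms of the product are precisely the pairs $(\{a\},\emptyset)$ and $(\emptyset,\{b\})$ for $a \in \alpha$, $b \in \beta$, which $\Phi$ sends bijectively onto the singletons of $\alpha+\beta$.

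The main (and only mildly nontrivial) step is $\exle$. Using clause 4 of \cref{DefTBasePlus}, $(A,B) \exle (C,D)$ holds in $\MSO(\alpha) \msotimes \MSO(\beta)$ iff one of the following three disjuncts holds: (i) $A \neq \bot$ and $D \neq \bot$, (ii) $A \exle C$ in $\MSO(\alpha)$, or (iii) $B \exle D$ in $\MSO(\beta)$. On the other side, $\Phi(A,B) \exle \Phi(C,D)$ iff there exist $x \in A \cup B$ and $y \in C \cup D$ with $x <_{\alpha+\beta} y$. To match the two, I would split into cases according to which of $\alpha$ or $\beta$ the witnesses $x$ and $y$ lie in. Witnesses with $x \in \alpha$ and $y \in \alpha$ correspond to (ii); witnesses with $x \in \beta$ and $y \in \beta$ correspond to (iii); a witness with $x \in \alpha$ and $y \in \beta$ corresponds exactly to (i) (noting that if $A$ is nonempty and $D$ is nonempty we can pick any $a \in A$, $d \in D$ and automatically get $a <_{\alpha+\beta} d$); and the remaining case $x \in \beta$, $y \in \alpha$ is ruled out on both sides by the definition of $<_{\alpha+\beta}$.

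The main obstacle, such as it is, is keeping the case analysis for $\exle$ straight and checking that the third disjunct in clause 4 of \cref{DefTBasePlus} (the ``$A \neq \bot$ and $D \neq \bot$'' clause) corresponds precisely to cross-block witnesses, rather than accidentally overlapping with the within-block cases; but since $\alpha$ and $\beta$ are disjoint in the sum, this is clean. Once $\exle$ is verified, $\Phi$ is an isomorphism of $\lang{\msotimes}$-structures, establishing $\MSO(\alpha) \msotimes \MSO(\beta) \cong \MSO(\alpha+\beta)$.
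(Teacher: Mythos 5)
Your proposal is correct and is precisely the verification the paper leaves implicit (the statement is given as an Observation with no written proof): the bijection $(A,B)\mapsto A\cup B$ and the symbol-by-symbol check, with the only real content in the $\exle$ clause, is the intended argument. Your case analysis there is right --- the cross-block witnesses $x\in\alpha$, $y\in\beta$ account exactly for the ``$A\neq\bot$ and $D\neq\bot$'' disjunct of \cref{DefTBasePlus}, and the reverse cross-block case is vacuous on both sides.
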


\begin{remark}
For the remainder of \cref{SecPrelim} and continuing into \cref{SecAxioms} we will use Ehrenfeuct-Fra\"isse games. For more details see \cite{HodgesBible}. In particular the definition of unnested formulas on page 58, the definition of quantifier rank on page 103, and also the entirety of sections 3.2 and section 3.3 where two variants of Ehrenfeuct-Fra\"isse games are described and analysed. We write $G_k(M,N)$ where Hodges writes $\EF_k[M,N]$, and will give the definition of this game now.
\end{remark}

\begin{definition}\label{DefEFGame}
Fix a natural number $k$. Let $\lang{}$ be a finite first order language and take two $\lang{}$-structures $M$ and $N$. The \textbf{unnested Ehrenfeucht game of length $k$} is a game played between two players $\efi$ and $\efii$ across $k$ turns. At each turn player $\efi$ first chooses one of the structures $M$ or $N$, and then chooses an element belonging to that structure. Then the player $\efii$ must respond by choosing an element in the remaining structure. Denote the element in $M$ chosen in the $i$-th turn $a_i$, and the element chosen in $N$ in the $i$-th turn $b_i$. After $k$ turns we therefore have two $k$-tuples $\bar{a} = (a_1,\ldots,a_k)$ and $\bar{b} = (b_1,\ldots,b_k)$. Player $\efii$ is declared winner after the play $(\bar{a},\bar{b})$ if and only if for every unnested atomic formula $\phi(x_1,\ldots,x_k)$,
\[
M \models \phi(\bar{a}) \Leftrightarrow N \models \phi(\bar{b}),
\]
and $\efi$ is declared winner if $\efii$ is not declared winner.\footnote{Note in the case $k = 0$ the criterion for declaring that $\efii$ is winner says that $M$ and $N$ agree on unnested atomic $\lang{}$-sentences.}
\end{definition}

\begin{definition}
Let $\lang{}$ be a finite first order language.\footnote{In some presentations of the Ehrenfeuct-Fra\"isse technique, function and constant symbols are prohibited. By restricting to unnested atomic formulas they can be accommodated.} We define $\approx_k$ to be the equivalence relation on the class of $\lang{}$-structures given by the following equivalent\footnote{The equivalence of 1 and 2 is a consequence of Theorem 3.3.2 on page 104 of \cite{HodgesBible}.} definitions,
\begin{enumerate}
\item $M \approx_k N$ if and only if $M$ and $N$ agree on \emph{unnested} $\lang{}$-sentences of quantifier rank at most $k$,
\item $M \approx_k N$ if and only if $\efii$ has a winning strategy in the Ehrenfeucht game $G_k(M,N)$.
\end{enumerate}
\end{definition}

\begin{theorem}\label{SumEFThm}
Let $M_1,M_2,N_1,N_2 \models T_{\base}$. Let $k \in \bb{N}$ and suppose that $M_1 \approx_k N_1$ and $M_2 \approx_k N_2$. Then $M_1 \msotimes M_2 \approx_k N_1 \msotimes N_2$. 
\end{theorem}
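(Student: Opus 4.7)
The plan is to use the game-theoretic characterisation of $\approx_k$ appearing as clause (2) of the definition. Fix winning strategies $\sigma_1$ and $\sigma_2$ for $\efii$ in $G_k(M_1, N_1)$ and $G_k(M_2, N_2)$ respectively. The goal is to package these into a winning strategy $\sigma$ for $\efii$ in $G_k(M_1 \msotimes M_2, N_1 \msotimes N_2)$ by running $\sigma_1$ and $\sigma_2$ in parallel, one on each coordinate.

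The strategy $\sigma$ is the obvious coordinatewise one. Since any element of $M_1 \msotimes M_2$ is a pair $(A, B)$ with $A \in M_1$ and $B \in M_2$, when $\efi$ plays $(A, B)$ in $M_1 \msotimes M_2$ at some turn, we regard this as $\efi$ having simultaneously played $A$ in an auxiliary run of $G_k(M_1, N_1)$ and $B$ in an auxiliary run of $G_k(M_2, N_2)$. $\efii$ then consults $\sigma_1$ and $\sigma_2$ for responses $A' \in N_1$ and $B' \in N_2$, and replies with $(A', B')$ in the outer game; the same idea works symmetrically when $\efi$ plays in $N_1 \msotimes N_2$. After $k$ rounds this produces tuples $(\bar{A}, \bar{B})$ in $M_1 \msotimes M_2$ and $(\bar{A'}, \bar{B'})$ in $N_1 \msotimes N_2$ whose coordinate projections are complete plays of $G_k(M_1, N_1)$ and $G_k(M_2, N_2)$ that $\efii$ has won. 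In particular $\bar{A}$ and $\bar{A'}$ agree on all unnested atomic $\lang{\msotimes}$-formulas in $M_1$ and $N_1$, and likewise $\bar{B}, \bar{B'}$ in $M_2, N_2$.

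The remaining step is to check that $\efii$ also wins the outer play, that is, that $(\bar{A}, \bar{B})$ and $(\bar{A'}, \bar{B'})$ agree on every unnested atomic $\lang{\msotimes}$-formula. This is a finite case analysis over the symbols of $\lang{\msotimes} = \{\subseteq, \bot, \At, \exle\}$ (plus equality). Equality, $\subseteq$, and the formula $X = \bot$ are handled coordinatewise by \cref{DefTBasePlus}, so their truth on the product is the conjunction of their truths on the two components, which transfers by the two auxiliary wins. Atomicity $\At((X, Y))$ of a product pair unpacks, using the product Boolean algebra structure, as $(\At(X) \wedge Y = \bot) \vee (X = \bot \wedge \At(Y))$, again a Boolean combination of unnested atomic assertions on the two components.

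The only real substance is the relation $\exle$, whose interpretation on the product reads
\[
(A, B) \exle (C, D) \Longleftrightarrow (A \neq \bot \wedge D \neq \bot) \vee A \exle C \vee B \exle D.
\]
Each of the three disjuncts is a Boolean combination of unnested atomic $\lang{\msotimes}$-formulas on the individual components. This is precisely what the inclusion of $\bot$ as a constant and $\At$ as a predicate in $\lang{\msotimes}$ buys us: $X = \bot$ is itself unnested atomic, which is also why, as flagged earlier, the statement would fail if one worked with $\lang{\MO}$ in place of $\lang{\msotimes}$. The main (and essentially only) obstacle is the careful bookkeeping needed to verify this last clause; once the three-clause definition of $\exle$ on the product is rewritten in component-level atomic formulas, both implications of the winning condition for the outer game follow from the two auxiliary wins, and the argument collapses.
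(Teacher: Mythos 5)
Your proposal is correct and follows essentially the same route as the paper's proof: run the two given winning strategies coordinatewise to build a strategy in the product game, then verify the winning condition by a case analysis on unnested atomic formulas, with the $\exle$ clause (handled via its three-disjunct definition on the product) being the only substantive case. The paper's proof does exactly this, treating $\exle$ in detail and leaving the other atomic cases as an exercise.
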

\begin{proof}
We argue using the unnested Ehrenfeucht games outlined in \cref{DefEFGame}. Our assumption is that $\efii$ has winning strategies in the games $G_k(M_1,N_1)$ and $G_k(M_2,N_2)$. We will show that $\efii$ has a winning strategy in the game $G_k(M_1 \msotimes M_2,N_1 \msotimes N_2)$, in fact we will explicitly give a winning strategy in terms of those we assumed to exist.

Here is the strategy promised, we outline the response of $\efii$ to a move $(A_i,B_i) \in M_1 \msotimes M_2$ by $\efi$ in the $i$-th turn, the overall strategy is obvious from this description,
\begin{enumerate}
\item upon receiving the $i$-th move $(A_i,B_i) \in M_1 \msotimes M_2$ from $\efi$, take both $A_i$ and $B_i$ and feed them into simulations of the games $G_k(M_1,N_1)$ and $G_k(M_2,N_2)$ respectively,
\item take the responses $C_i \in N_1$ and $D_i \in N_2$ in those simulated games given by the winning strategies we assumed to exist,
\item form the move $(C_i,D_i) \in N_1 \msotimes N_2$ in the main game as a response.
\end{enumerate}

In order to verify that this is a winning strategy, suppose that a play has been completed. Unnested atomic formulas in $X_1,\ldots,X_k$ are of the form $X_i = X_j$, $X_i \subseteq X_j$, $\At(X_i)$, or $X_i \exle X_j$ where $1 \leq i,j \leq k$.\footnote{Here one or both of the variables $X_i$ and $X_j$ could be replaced with $\bot$, we leave it to the reader to think through these cases.} We will deal with the case $X_i \exle X_j$, and leave the remaining cases as an exercise. Suppose,
\[
M_1 \msotimes M_2 \models (A_i,B_i) \exle (A_j,B_j).
\] 
Then by definition of $\msotimes$ (\cref{DefTBasePlus}) at least one of the following must hold,
\begin{enumerate}
\item $M_1 \not\models A_i = \bot$ and $M_2 \not\models B_j = \bot$,
\item $M_1 \models A_i \exle A_j$, 
\item $M_2 \models B_i \exle B_j$.
\end{enumerate}
Now by our assumption that the pairs $(\bar{A},\bar{C})$ and $(\bar{B},\bar{D})$ are winning plays for $\efii$ in $G_k(M_1,N_1)$ and $G_k(M_2,N_2)$ respectively, this implies that at least one of the following must hold,
\begin{enumerate}
\item $N_1 \not\models C_i = \bot$ and $N_2 \not\models D_j = \bot$,
\item $N_1 \models C_i \exle C_j$, 
\item $N_2 \models D_i \exle D_j$.
\end{enumerate}
This then implies, again by definition of $\msotimes$, that,
\[
N_1 \msotimes N_2 \models (C_i,D_i) \exle (C_j,D_j).
\]
By symmetry we can therefore conclude,
\[
M_1 \msotimes M_2 \models (A_i,B_i) \exle (A_j,B_j) \Leftrightarrow N_1 \msotimes N_2 \models (C_i,D_i) \exle (C_j,D_j),
\]
precisely as required. 
\end{proof}

\begin{corollary}\label{SumEFCor}
Let $M_1,M_2,N_1,N_2 \models T_{\base}$. Suppose that $M_1 \equiv N_1$ and $M_2 \equiv N_2$. Then $M_1 \msotimes M_2 \equiv N_1 \msotimes N_2$. 
\end{corollary}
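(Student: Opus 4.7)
The plan is to reduce this corollary directly to \cref{SumEFThm} using the standard fact that elementary equivalence coincides with the intersection of the relations $\approx_k$ over all $k \in \bb{N}$. The first step is to justify this coincidence: every first-order $\lang{\msotimes}$-formula is logically equivalent to an unnested one, possibly at the cost of increasing the quantifier rank (see \cite{HodgesBible}, section 2.6). Hence two $\lang{\msotimes}$-structures agree on all first-order sentences if and only if they agree on all unnested sentences of each fixed quantifier rank, i.e.\ $M \equiv N$ iff $M \approx_k N$ for every $k$.

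Granting this reduction, I would fix an arbitrary $k \in \bb{N}$. The hypothesis $M_1 \equiv N_1$ delivers $M_1 \approx_k N_1$, and similarly $M_2 \approx_k N_2$. Applying \cref{SumEFThm} to these two equivalences yields $M_1 \msotimes M_2 \approx_k N_1 \msotimes N_2$. Since $k$ was arbitrary, taking the intersection over all $k$ gives $M_1 \msotimes M_2 \equiv N_1 \msotimes N_2$, which is the desired conclusion.

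There is no real obstacle here, as all the substantive combinatorial content has been discharged in \cref{SumEFThm}; this corollary is merely a repackaging of that theorem into the more familiar language of elementary equivalence. The only point requiring any care is the passage between unnested and arbitrary formulas, but this is a wholly standard reduction and requires no work specific to $T_{\base}$ or to the operation $\msotimes$.
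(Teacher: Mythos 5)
Your argument is correct and is exactly the paper's proof: the paper also dispatches this corollary by noting that elementary equivalence is the intersection over $k \in \bb{N}$ of the relations $\approx_k$ and then applying \cref{SumEFThm}. Your extra remark justifying the unnested-formula reduction via \cite{HodgesBible} is a fair elaboration of a point the paper leaves implicit.
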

\begin{proof}
From \cref{SumEFThm} using the fact that elementary equivalence is the intersection over $\bb{N}$ of the relations $\approx_k$.
\end{proof}

\clearpage
\section{The axiomatisation \texorpdfstring{$T_{\MSO(\Fin)}$}{TMFin}.}\label{SecAxioms}

\begin{definition}\label{DefTMFin}
The $\lang{\msotimes}$-theory $T_{\MSO(\Fin)}$ is defined to be $T_{\base}$ extended by axioms which say that,
\begin{enumerate}
\item the linear ordering $\exle$ of the atoms is discrete with endpoints,
\item every non-empty element contains a smallest atom with respect to the ordering $\exle$, that is,
\[
\forall X (X \neq \bot \rightarrow (\exists x (X(x) \wedge \forall y (y \exle x \rightarrow \neg X(y))))).
\]
\end{enumerate}
\end{definition}

\begin{definition}
For each $n \in \bb{N}$, we write $\MSO(n)$ for the monadic second order version of the unique finite linear order with $n$ elements. By the \textbf{pseudofinite monadic second order theory of linear order} we mean the $\lang{\MO}$-theory $\bigcap_{n \in \bb{N}} \Th(\MSO(n))$.
\end{definition}

\begin{notation}
In each model of $T_{\MSO(\Fin)}$ there are smallest and largest atoms which are clearly definable elements. We will write $0$ as shorthand for the smallest atom, and $0^*$ as shorthand for the largest atom.\footnote{We choose $0^*$ because for any infinite $M \models T_{\MSO(\Fin)}$, $\omega + \omega^*$ embeds (in precisely one way) as a linear order \emph{with successor and predecessor functions} into $(\At(M),<,s,p)$. Our preferred presentation of $\omega + \omega^*$ is $\{0,1,2,\ldots\} \cup \{\ldots,2^*,1^*,0^*\}$.} 
\end{notation}

\begin{remark}
The axioms from $T_{\MSO(\Fin)}$ imply that every non-empty element contains a largest atom with respect to $\exle$. To see this note that if $M \models T_{\MSO(\Fin)}$ and $A \in M$, then by comprehension there is an element $B \in M$ such that $B$ contains precisely those atoms which are strictly greater than every atom contained in $A$. Then either $B$ is empty, in which case the right endpoint of the linear ordering of the atoms is the largest atom in $A$, or $B$ is non-empty, in which case $B$ contains a smallest atom and the predecessor of this atom is easily seen to be the largest atom in $A$. 
\end{remark}

The following definition is just a special case of \cref{ResRelDef} which makes sense for models of $T_{\MSO(\Fin)}$ but not for arbitrary models of $T_{\base}$.

\begin{definition}
Let $M \models T_{\MSO(\Fin)}$ and $a \in \At(M)$. Then we write $M \upharpoonright [0,a)$ for the restriction of $M$ to the element defined by the formula (in free variable X),
\[
\forall y (X(y) \leftrightarrow y<a).
\]
\end{definition}

\begin{proposition}\label{bddformulalemma}
Let $\phi$ be an $\lang{\msotimes}$-sentence. Then there is an $\lang{\msotimes}$-formula $\phi^{<x}$ such that for all $M \models T_{\MSO(\Fin)}$ and $a \in \At(M)$ we have,
\[
M \models \phi^{<a} \Longleftrightarrow M \upharpoonright [0,a) \models \phi.
\]
\end{proposition}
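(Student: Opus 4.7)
The plan is to combine Proposition \ref{ResRelProp} with the comprehension schema: the initial segment $[0,a)$ is parametrically definable from the atom $a$, so I can relativise to it via an existential quantifier binding the element representing this segment, exactly along the lines of the template already exhibited in Remark \ref{ResRelRemark}.

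Given the sentence $\phi$, Proposition \ref{ResRelProp} supplies an $\lang{\msotimes}$-formula $\phi^{X}$ in a single free variable $X$ with $M \models \phi^{A} \Longleftrightarrow M \upharpoonright A \models \phi$ for any $M \models T_{\base}$ and $A \in M$. I would then set
\[
\phi^{<x} \;:=\; \exists X \bigl( \forall y (X(y) \leftrightarrow y \exle x) \wedge \phi^{X} \bigr),
\]
with $X$ and $y$ chosen fresh with respect to $\phi^{X}$ to avoid capture. To verify correctness, fix $M \models T_{\MSO(\Fin)}$ and an atom $a \in \At(M)$. Applying $\Comp_{\eta}$ to $\eta(y;x) := y \exle x$, there is an element $A_a \in M$ whose atoms are exactly the $y \in \At(M)$ with $y \exle a$; atomicity of the Boolean algebra makes $A_a$ unique, and by construction $A_a$ is the element that defines $M \upharpoonright [0,a)$. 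Proposition \ref{ResRelProp} applied to $A_a$ then gives $M \models \phi^{<a} \Longleftrightarrow M \models \phi^{A_a} \Longleftrightarrow M \upharpoonright [0,a) \models \phi$, as required.

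I do not anticipate any substantive obstacle: this is a routine combination of comprehension with the quantifier-relativisation already established. The only minor care required is bound-variable hygiene for $X$ and $y$, and the observation that while Proposition \ref{ResRelProp} applies to any model of $T_{\base}$, the extra axioms of $T_{\MSO(\Fin)}$ play no essential role here beyond the comprehension schema already present in $T_{\base}$ (they do, however, ensure that atoms $a$ are genuinely available as parameters for which the formula $\phi^{<x}$ is meaningful in the intended way).
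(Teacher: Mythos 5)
Your proposal is correct and matches the paper's intent exactly: the paper's own proof is just the one-line remark that the statement is an easy consequence of \cref{ResRelProp} and the definition of $T_{\MSO(\Fin)}$, and your explicit formula $\phi^{<x} := \exists X\bigl(\forall y (X(y) \leftrightarrow y \exle x) \wedge \phi^{X}\bigr)$, justified via comprehension and atomicity, is precisely the intended elaboration. No gaps.
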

\begin{proof}
This is an easy consequence of \cref{ResRelProp} and the definition of $T_{\MSO(\Fin)}$.
\end{proof}

\begin{lemma}\label{ResIndLemma}
Let $M \models T_{\MSO(\Fin)}$ and let $\phi(x)$ be a formula in one free variable. Let $s(x)=y$ be shorthand for the definable relation `$x\exle 0^*$ and $y$ is the immediate successor of $x$'. Then,
\[
M \models (\phi(0) \wedge \forall x \forall y((\phi(x)  \wedge s(x)=y) \rightarrow \phi(y))) \rightarrow \forall x \phi(x).
\]
\end{lemma}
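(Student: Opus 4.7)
The plan is to argue by contradiction using the least-atom axiom together with comprehension, reducing the induction principle to the well-foundedness-like property built into $T_{\MSO(\Fin)}$.

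First I would pass from the hypothesis $\neg \forall x\, \phi(x)$ to the existence of a non-empty ``failure set'' by invoking comprehension. Concretely, apply $\Comp_{\eta}$ with $\eta(x) \colonequals \neg \phi(x)$ to get an element $Y \in M$ with the property that for every atom $x$, $X(x)$ holds for $X = Y$ iff $\neg \phi(x)$. (Since $\phi$ is assumed to have a single free variable, no parameter handling is needed; if one wants to be pedantic, parameters can be absorbed into the $\bar Y$ of $\Comp_{\eta}$.) Under the contradictory hypothesis there is some atom $a$ with $\neg \phi(a)$, hence $a \subseteq Y$, so $Y \neq \bot$.

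Next, invoke the second axiom of \cref{DefTMFin}: since $Y \neq \bot$, there is a smallest atom $y_0$ with $y_0 \subseteq Y$, so in particular $\neg \phi(y_0)$. I would then split into two cases. If $y_0 = 0$, this directly contradicts the base case $\phi(0)$. Otherwise $0 \exle y_0$, and because the order on the atoms is discrete with endpoints (the first axiom of $T_{\MSO(\Fin)}$), $y_0$ has an immediate predecessor $x_0$, i.e.\ an atom with $s(x_0) = y_0$. By minimality of $y_0$ in $Y$ we have $x_0 \nsubseteq Y$, hence $\phi(x_0)$. Applying the inductive step hypothesis $\phi(x_0) \wedge s(x_0) = y_0 \to \phi(y_0)$ then yields $\phi(y_0)$, contradicting $y_0 \subseteq Y$.

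There is no real obstacle here; the only subtle point is that the ``induction'' is not genuinely inductive inside $M$ but rather is a consequence of the least-atom axiom applied to the set carved out by comprehension. The axioms of $T_{\MSO(\Fin)}$ have been arranged precisely so that this translation is immediate, so the proof amounts to (i) form the failure set by comprehension, (ii) extract its minimum, (iii) contradict either the base case or the inductive step according as the minimum is $0$ or has a predecessor.
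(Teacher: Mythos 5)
Your proof is correct and is essentially the paper's own argument, just written out in more detail: both form the failure set $\{x : \neg\phi(x)\}$ by comprehension, extract its least atom using the second axiom of $T_{\MSO(\Fin)}$, and derive a contradiction from the base case if that atom is $0$ or from the inductive step applied to its immediate predecessor otherwise.
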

\begin{proof}
This is a straightforward exercise in using the axioms of $T_{\MSO(\Fin)}$. By comprehension there is an element $A \in M$ such that $M \models \forall x (\neg \phi(x) \leftrightarrow A(x))$. This element $A$, if non-empty, contains a smallest atom. If this smallest atom is $0$ we are done, else we can consider the immediate predecessor of the smallest atom (by discreteness of the linear order) and then we are done. 
\end{proof}

\begin{remark}
The above is referred to as `restricted induction' by Doets (see \cite{Doets} page 86), restricted presumably referring to the fact that the linear order admits a greatest element.
\end{remark}

\begin{proposition}\label{MSOAddOneProp}
Let $M \models T_{\MSO(\Fin)}$ and suppose that for some $k,n \in \bb{N}$ we have $M \approx_k \MSO(n)$. Then $M \msotimes \MSO(1) \approx_k \MSO(n+1)$.
\end{proposition}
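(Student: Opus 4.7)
The plan is to deduce this as a direct consequence of \cref{SumEFThm} combined with \cref{PlusExtendsStandard}. The proposition has the shape of a congruence-type statement: we are multiplying on the right by $\MSO(1)$ in both cases, and we already have a congruence theorem (\cref{SumEFThm}) telling us that $\msotimes$ respects the relation $\approx_k$ on both coordinates.

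Specifically, I would first note that $\MSO(1) \approx_k \MSO(1)$ trivially, as $\approx_k$ is an equivalence relation. Combined with the hypothesis $M \approx_k \MSO(n)$, \cref{SumEFThm} applied with $M_1 = M$, $N_1 = \MSO(n)$, and $M_2 = N_2 = \MSO(1)$ yields
\[
M \msotimes \MSO(1) \approx_k \MSO(n) \msotimes \MSO(1).
\]
Then by \cref{PlusExtendsStandard}, $\MSO(n) \msotimes \MSO(1) \cong \MSO(n + 1)$, since the concatenation of the linear orders with $n$ and $1$ elements is the linear order with $n+1$ elements. Since $\approx_k$ is manifestly invariant under isomorphism (it is defined in terms of the theory of unnested sentences of quantifier rank at most $k$, or equivalently in terms of winning strategies in the EF game, both of which are preserved under isomorphism), we conclude $M \msotimes \MSO(1) \approx_k \MSO(n+1)$.

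There is essentially no obstacle here; the statement is set up precisely so that \cref{SumEFThm} applies. The only minor bookkeeping is verifying that $\MSO(1)$ as defined (the monadic second order version of the one-element linear order) is indeed a model of $T_{\base}$, which it evidently is, so that the hypotheses of \cref{SumEFThm} are met.
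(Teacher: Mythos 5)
Your proposal is correct and follows exactly the paper's own argument: reflexivity of $\approx_k$ for $\MSO(1)$, then \cref{SumEFThm}, then \cref{PlusExtendsStandard} and isomorphism-invariance of $\approx_k$. No differences worth noting.
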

\begin{proof}
Trivially we have $\MSO(1) \approx_k \MSO(1)$. Therefore if we assume that $M \approx_k \MSO(n)$ \cref{SumEFThm} gives us that $M \msotimes \MSO(1) \approx_k \MSO(n) \msotimes \MSO(1)$. But then as we noted in \cref{PlusExtendsStandard}, $\MSO(n) \msotimes \MSO(1) \cong \MSO(n+1)$ so we are done. 
\end{proof}

The proof of the following theorem follows the same outline as the proof of \cite{vGSteinberg} Proposition 4.2. 

\begin{proposition}\label{TMFinEFStandardThm}
For each $k \in \bb{N}$ and $M \models T_{\MSO(\Fin)}$, there is $n \in \bb{N}$ such that $M \approx_k \MSO(n)$. 
\end{proposition}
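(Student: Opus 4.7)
The plan is to combine a finite-language pigeonhole argument on the $\approx_k$-classes of $\{\MSO(n) : n \in \bb{N}\}$ with the restricted induction principle of \cref{ResIndLemma} applied to the initial segments $M \upharpoonright [0,x)$. Since $\lang{\msotimes}$ is finite, there are only finitely many inequivalent unnested $\lang{\msotimes}$-sentences of quantifier rank at most $k$, hence only finitely many $\approx_k$-classes. In particular the assignment $n \mapsto [\MSO(n)]_{\approx_k}$ has finite image, so there exist $n_0 < n_1$ with $\MSO(n_0) \approx_k \MSO(n_1)$. Setting $p := n_1 - n_0$ and applying \cref{SumEFThm} with second arguments $\MSO(p) \approx_k \MSO(p)$ (and iterating), I get $\MSO(n) \approx_k \MSO(n+p)$ for every $n \geq n_0$, so that $S := \{0, 1, \ldots, n_0 + p - 1\}$ contains a representative of every $\approx_k$-class realised among the $\MSO(n)$. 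For each $n \in S$ let $\sigma_n$ be a single unnested $\lang{\msotimes}$-sentence of quantifier rank at most $k$ characterising $[\MSO(n)]_{\approx_k}$ (obtained as the conjunction of the finitely many unnested sentences of rank $\leq k$ true in $\MSO(n)$).

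Next, consider the $\lang{\msotimes}$-formula
\[
\phi(x) := \bigvee_{n \in S} \sigma_n^{<x},
\]
which by \cref{bddformulalemma} says that $M \upharpoonright [0,x) \approx_k \MSO(n)$ for some $n \in S$. The base case $\phi(0)$ holds because $M \upharpoonright [0,0)$ is the one-element Boolean algebra, isomorphic to $\MSO(0)$, and $0 \in S$. For the inductive step, suppose $\phi(x)$ holds with witness $n \in S$ and $s(x) = y$; then $M \upharpoonright [0,y) \cong (M \upharpoonright [0,x)) \msotimes \MSO(1)$, so \cref{MSOAddOneProp} yields $M \upharpoonright [0,y) \approx_k \MSO(n+1)$. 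Either $n+1 \in S$ and we are done, or $n+1 = n_0 + p$, in which case the periodicity $\MSO(n_0+p) \approx_k \MSO(n_0)$ puts us back in $S$. Applying \cref{ResIndLemma} then gives $\phi(x)$ for every atom $x$, and specialising to $x = 0^*$ together with $M \cong (M \upharpoonright [0,0^*)) \msotimes \MSO(1)$ and one final application of \cref{MSOAddOneProp} produces some $n^* \in \bb{N}$ with $M \approx_k \MSO(n^*)$.

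The main technical point to verify is that each intermediate restriction $M \upharpoonright [0,x)$ is itself a model of $T_{\MSO(\Fin)}$ (so that \cref{MSOAddOneProp} applies throughout the induction); this follows from comprehension in $M$ together with \cref{ResRelProp}, though it requires some care to check the axiom schema transfers to the restriction. Degenerate cases (such as $M$ having no atoms, or just one atom) are handled directly by observing $M \cong \MSO(0)$ or $M \cong \MSO(1)$. The conceptual crux, and what I expect to be the only genuinely substantive step, is noticing that finite-language finiteness of $\approx_k$-classes combined with \cref{SumEFThm} turns the sequence $(\MSO(n))_{n \in \bb{N}}$ into an eventually periodic sequence modulo $\approx_k$, so that "$\approx_k$-equivalent to some $\MSO(n)$" can be captured by a single $\lang{\msotimes}$-formula of bounded quantifier rank, making the internal restricted induction available.
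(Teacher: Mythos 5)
Your proposal is correct and follows essentially the same route as the paper: the formula $\bigvee_{n \in S}\sigma_n^{<x}$ is the paper's $\phi_k^{<x}$, and the base case, the inductive step via \cref{MSOAddOneProp}, the appeal to \cref{ResIndLemma}, and the final $\msotimes\,\MSO(1)$ step all match. The only difference is that your pigeonhole/periodicity argument for cutting down to the finite set $S$ is unnecessary --- the Fra\"isse-Hintikka theorem already gives finitely many $\approx_k$-classes in total, so the disjunction over those classes containing some $\MSO(n)$ is automatically finite, which is how the paper proceeds.
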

\begin{proof}
To start, fix $k \in \bb{N}$. By the Fra\"isse-Hintikka theorem (\cite{HodgesBible} Theorem 3.3.2) there is an $\lang{\msotimes}$-sentence $\phi_k$ such that for each $\lang{\msotimes}$-structure $M$, $M \models \phi_k$ iff there is $n \in \bb{N}$ such that $M \approx_k \MSO(n)$. This is because there are finitely many equivalence classes of the relation $\approx_k$, and each of these equivalence classes is axiomatised by a single $\lang{\msotimes}$-sentence. Taking just those equivalence classes which contain $\MSO(n)$ for some $n \in \bb{N}$, and then forming the finite disjunction of the sentences axiomatising each of the equivalence classes we get the required sentence $\phi_k$.

Next take $M \models T_{\MSO(\Fin)}$ and consider the formula $\phi_k^{<x}$ given by \cref{bddformulalemma}. Recall that by construction $\phi_k^{<x}$ is such that for each $a \in \At(M)$,
\[
M \models \phi_k^{<a} \Leftrightarrow M \upharpoonright [0,a) \models \phi_k.
\]
From $M \models T_{\MSO(\Fin)}$, $M \upharpoonright [0,0) \cong \MSO(0)$, and therefore $M \models \phi_k^{<0}$. Now by \cref{MSOAddOneProp} we get for any $a,b \in \At(M)$ that ,
\[
M \models (s(a)=b \rightarrow (\phi_k^{<a} \rightarrow \phi_k^{<b})).
\]
By \cref{ResIndLemma} we therefore get $M \models \phi_k^{<0^*}$, so $M \upharpoonright [0,0^*) \approx_k \MSO(n)$ for some $n \in \bb{N}$. Since $M \cong M \upharpoonright [0,0^*) \msotimes \MSO(1)$ \cref{MSOAddOneProp} gives $M \approx_k \MSO(n+1)$ and hence $M \models \phi_k$, so we are done. 
\end{proof}

\begin{theorem}\label{TMFinMainTheorem}
$T_{\MSO(\Fin)}$ is an axiom system for the pseudofinite monadic second order theory of linear order. 
\end{theorem}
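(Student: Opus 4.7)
The plan is to establish both inclusions between the deductive closure of $T_{\MSO(\Fin)}$ and the pseudofinite theory $\bigcap_{n \in \bb{N}} \Th(\MSO(n))$; since $\lang{\msotimes}$ is an extension by definitions of $\lang{\MO}$, it does no harm to carry out the entire argument in $\lang{\msotimes}$.

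For the soundness direction, I first check that $\MSO(n) \models T_{\MSO(\Fin)}$ for every $n \in \bb{N}$. The base axioms hold by \cref{BaseTheoryObs} (transported to $\lang{\msotimes}$ via the interdefinability established earlier), and the two additional axioms of \cref{DefTMFin}---discreteness of the atom order with endpoints, and the existence of a smallest atom in each non-empty element---are self-evident for finite linear orders. Consequently, any $\lang{\msotimes}$-sentence derivable from $T_{\MSO(\Fin)}$ lies in $\bigcap_{n \in \bb{N}} \Th(\MSO(n))$.

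For the converse, let $\phi$ be an $\lang{\msotimes}$-sentence with $\MSO(n) \models \phi$ for every $n \in \bb{N}$. Replacing $\phi$ by a logically equivalent unnested sentence of the same quantifier rank $k$ (a standard transformation, see \cite{HodgesBible}), I invoke the completeness theorem: it suffices to show that every $M \models T_{\MSO(\Fin)}$ satisfies $\phi$. Fixing such an $M$, \cref{TMFinEFStandardThm} yields some $n \in \bb{N}$ with $M \approx_k \MSO(n)$. Since $\phi$ is unnested of quantifier rank $k$ and $\MSO(n) \models \phi$, the very definition of $\approx_k$ forces $M \models \phi$. By completeness, $T_{\MSO(\Fin)} \vdash \phi$.

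There is no genuine obstacle remaining at this stage: all the real work---the Fra\"iss\'e--Hintikka description of $\approx_k$, the composition theorem (\cref{SumEFThm}), the inductive growth argument via \cref{MSOAddOneProp}, and restricted induction along the atoms (\cref{ResIndLemma})---has been packaged into \cref{TMFinEFStandardThm}. The only minor subtlety is the conversion of $\phi$ to an unnested equivalent of the same quantifier rank, which is routine.
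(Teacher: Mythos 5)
Your proof is correct and follows essentially the same route as the paper: soundness is immediate, and the converse direction is exactly the paper's reduction to \cref{TMFinEFStandardThm} (the paper phrases it contrapositively, via ``every sentence true in some model of $T_{\MSO(\Fin)}$ holds in some $\MSO(n)$'', but the content is identical). The only nitpick is that unnesting can raise the quantifier rank (e.g.\ eliminating the constant $\bot$ from atomic formulas introduces a quantifier), so one should take $k$ to be the quantifier rank of the unnested equivalent rather than of $\phi$ itself --- a point the paper flags parenthetically and which does not affect the argument.
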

\begin{proof}
It is clear that for each $n \in \bb{N}$ we have $\MSO(n) \models T_{\MSO(\Fin)}$. It is therefore enough to show that for each $M \models T_{\MSO(\Fin)}$ and $\lang{\msotimes}$-sentence $\phi$, if $M \models \phi$ then there exists $n \in \bb{N}$ such that $\MSO(n) \models \phi$. This is immediate from \cref{TMFinEFStandardThm}, taking $k$ to be the quantifier rank of $\phi$ (more accurately, the quantifier rank of some unnested $\lang{\msotimes}$-sentence which is logically equivalent to $\phi$).
\end{proof}

\begin{proposition}\label{TMFinSumThm}
Let $M,N \models T_{\MSO(\Fin)}$, then $M \msotimes N \models T_{\MSO(\Fin)}$.
\end{proposition}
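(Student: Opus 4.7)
The plan is to leverage the Ehrenfeucht-Fra\"isse machinery developed above---specifically combining \cref{TMFinEFStandardThm}, \cref{SumEFThm}, and \cref{PlusExtendsStandard}---to reduce the claim to the obvious fact that each finite standard model $\MSO(k)$ is itself a model of $T_{\MSO(\Fin)}$. Rather than verifying the axioms of $T_{\MSO(\Fin)}$ directly on the structure $M \msotimes N$ from \cref{DefTBasePlus} (which would require working through the comprehension schema, the endpoint axiom, and discreteness of the induced linear order on atoms, all against the case-split defining $\exle$ on the product), the idea is to transport satisfaction from the standard models using the EF-game analysis already in hand.

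Concretely I would fix an arbitrary axiom $\psi \in T_{\MSO(\Fin)}$, replace it with a logically equivalent unnested $\lang{\msotimes}$-sentence via the standard unnesting procedure, and let $k$ be the quantifier rank of this unnested form. I would then apply \cref{TMFinEFStandardThm} separately to $M$ and to $N$ to obtain $m,n \in \bb{N}$ (depending on $k$) with $M \approx_k \MSO(m)$ and $N \approx_k \MSO(n)$. A single application of \cref{SumEFThm} then yields $M \msotimes N \approx_k \MSO(m) \msotimes \MSO(n)$, and by \cref{PlusExtendsStandard} this latter structure is isomorphic to $\MSO(m+n)$.

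Since $\MSO(m+n)$ manifestly satisfies every axiom of $T_{\MSO(\Fin)}$---these axioms are just elementary facts about the monadic second order version of a finite linear order---we have $\MSO(m+n) \models \psi$. Agreement on unnested sentences of quantifier rank at most $k$ then gives $M \msotimes N \models \psi$. Since $\psi$ was an arbitrary axiom of $T_{\MSO(\Fin)}$, we conclude $M \msotimes N \models T_{\MSO(\Fin)}$.

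I do not anticipate any real obstacle. The only delicate point is the bookkeeping around unnested versus general formulas and the quantifier rank assigned to their unnested translations, and this is handled in precisely the same manner as in the proof of \cref{TMFinMainTheorem}. The attractiveness of this approach is that it completely sidesteps a hands-on verification of the comprehension schema in $M \msotimes N$, which would otherwise require a Feferman-Vaught style separation of formulas between the two factors; morally, that separation is exactly the content already packaged into \cref{SumEFThm}.
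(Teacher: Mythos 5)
Your proposal is correct and follows essentially the same route as the paper: reduce to finite standard models via \cref{TMFinEFStandardThm}, combine with \cref{SumEFThm} and \cref{PlusExtendsStandard} to get $M \msotimes N \approx_k \MSO(m+n)$, and transfer satisfaction back. The only cosmetic difference is that the paper packages the final transfer step through \cref{TMFinMainTheorem} rather than checking the axioms one at a time, but the substance is identical.
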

\begin{proof}
By \cref{TMFinMainTheorem} it is enough to show, for each $M,N \models T_{\MSO(\Fin)}$ and $k \in \bb{N}$, that there exists $n \in \bb{N}$ such that $M \msotimes N \approx_k \MSO(n)$. 

Fix $k \in \bb{N}$, then there exists $n_1 \in \bb{N}$ such that $M \approx_k \MSO(n_1)$, and there exists $n_2 \in \bb{N}$ such that $N \approx_k \MSO(n_2)$. But then we are done by \cref{PlusExtendsStandard,SumEFThm} as taking $n=n_1 + n_2$ we get $M \msotimes N \approx_k \MSO(n)$ as required.
\end{proof}

\begin{corollary}\label{TMFinSumCor}
Let $T,T' \models T_{\MSO(\Fin)}$ be complete and let $M \models T$ and $N \models T'$ be any models. Then taking $T \msotimes T' \coloneqq \Th(M \msotimes N)$ gives a well defined binary operation on the set of complete theories extending $T_{\MSO(\Fin)}$. 
\end{corollary}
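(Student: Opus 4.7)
The plan is to verify two distinct obligations packaged into the statement: first, that the theory $\Th(M \msotimes N)$ lands in the intended codomain (i.e.\ it is indeed a complete extension of $T_{\MSO(\Fin)}$), and second, that its value depends only on $T$ and $T'$ rather than on the specific models $M$ and $N$ chosen as witnesses. Both obligations reduce directly to results already established in the section, so the proof will be essentially a two-line invocation rather than a fresh argument.

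For the first obligation, I would simply cite \cref{TMFinSumThm}: since $M, N \models T_{\MSO(\Fin)}$, we have $M \msotimes N \models T_{\MSO(\Fin)}$, and $\Th(M \msotimes N)$ is automatically a complete theory (being the theory of a single structure) extending $T_{\MSO(\Fin)}$.

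For the second obligation, suppose $M, M' \models T$ and $N, N' \models T'$ are any two choices of representatives. Because $T$ and $T'$ are complete, we have $M \equiv M'$ and $N \equiv N'$. Applying \cref{SumEFCor} to these two elementary equivalences yields $M \msotimes N \equiv M' \msotimes N'$, which is exactly the statement $\Th(M \msotimes N) = \Th(M' \msotimes N')$. Hence $T \msotimes T'$ is a well-defined function of the pair $(T, T')$.

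There is no real obstacle here; the work has already been done in \cref{SumEFThm} and its corollary, together with \cref{TMFinSumThm}. The only thing to be careful about is to separate the two logical content points cleanly — closure of the class of complete extensions of $T_{\MSO(\Fin)}$ under $\msotimes$ on one hand, and independence from the choice of representatives on the other — since both are needed for the operation on theories (as opposed to structures) to make sense.
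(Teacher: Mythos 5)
Your proposal is correct and follows exactly the paper's own argument: well-definedness from \cref{SumEFCor} (complete theories give elementarily equivalent representatives, hence elementarily equivalent sums) and membership in the codomain from \cref{TMFinSumThm}. The paper's proof is just a one-line citation of these same two results, so your more explicit separation of the two obligations is merely a fuller write-up of the identical argument.
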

\begin{proof}
By \cref{SumEFCor} this gives us a well defined operation, and by \cref{TMFinSumThm} we have $T \msotimes T' \models T_{\MSO(\Fin)}$.
\end{proof}

\clearpage
\section{The completions of \texorpdfstring{$T_{\MSO(\Fin)}$}{TMFin}.}\label{SecCompletions}

In this section we will give a full description of the completions of $T_{\MSO(\Fin)}$. Every finite model of $T_{\MSO(\Fin)}$ is of the form $\MSO(n)$ for some $n \in \bb{N}$, but it is an immediate consequence of compactness that $T_{\MSO(\Fin)}$ has infinite models. Therefore there are completions of $T_{\MSO(\Fin)}$ which are not of the form $\Th(\MSO(n))$ for any $n \in \bb{N}$. 

\begin{notation}
We write $\bb{P}$ for the set of prime natural numbers.
\end{notation}

\begin{CJK*}{UTF8}{gbsn}
\begin{theorem}[Chinese remainder theorem, (秦九韶, 1247)\footnote{See \cite{Dauben} pp 309 onwards for more historical background.}]\label{CRT}
Let $n_1,\ldots,n_k \in \bb{N}$ be pairwise coprime natural numbers. Let $a_1,\ldots,a_k \in \bb{Z}$. Then there exists a unique $x \in \bb{N}$ such that $0 \leq x < \prod_{i=1}^k n_i$ and such that $x \equiv a_i \mod n_i$ for $0 \leq i < k$.
\end{theorem}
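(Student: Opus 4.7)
The plan is to prove uniqueness first, since it is a short divisibility argument, and then leverage it to get existence via a counting argument; this avoids any explicit use of Bézout's identity. For uniqueness, I would suppose $x, y$ both satisfy the congruences with $0 \leq x, y < N := \prod_{i=1}^k n_i$. Then $n_i \mid (x-y)$ for each $i$, and the key step is that pairwise coprimality of the $n_i$ upgrades this to $N \mid (x-y)$; since $|x-y| < N$, this forces $x = y$. The only subtlety here is the standard induction on $k$ showing that if $n_1, \ldots, n_k$ are pairwise coprime and each divides an integer $m$, then so does their product, which I would carry out by iterating the fact that $\gcd(a,b) = 1$ and $a,b \mid m$ imply $ab \mid m$.

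For existence, given uniqueness, I would use a cardinality argument rather than an explicit construction. Consider the map
\[
\Phi \colon \{0,1,\ldots,N-1\} \longrightarrow \prod_{i=1}^k \{0,1,\ldots,n_i - 1\}, \qquad \Phi(x) = (x \bmod n_1, \ldots, x \bmod n_k).
\]
Uniqueness (applied to the case $a_i = x \bmod n_i$) says exactly that $\Phi$ is injective. Since the domain and codomain both have cardinality $N$, $\Phi$ is a bijection. In particular, for any target tuple $(a_1 \bmod n_1, \ldots, a_k \bmod n_k)$ there is a (necessarily unique) $x \in \{0,1,\ldots,N-1\}$ in the preimage, which is exactly the $x$ required by the theorem.

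The main obstacle, such as it is, is the divisibility step at the heart of uniqueness; everything else is formal. An alternative existence argument constructs $x$ directly as $\sum_{i=1}^k a_i M_i N_i$, where $N_i = N/n_i$ and $M_i$ is a modular inverse of $N_i$ modulo $n_i$ guaranteed by $\gcd(N_i, n_i) = 1$, and then reduces modulo $N$. I would mention this as a remark but prefer the counting proof above because it is shorter and uses only the pigeonhole principle once uniqueness is in hand.
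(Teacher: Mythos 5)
Your proof is correct. Note, though, that the paper does not prove this statement at all: it is stated as a classical result (with a historical citation) and used as a black box in \cref{ResUniqueExtProp} and \cref{ResConsistentProp}, so there is no proof in the paper to compare against. Your uniqueness-then-counting argument is a standard and complete proof: uniqueness follows from the fact that pairwise coprime moduli dividing $x-y$ force $\prod_i n_i \mid (x-y)$, and existence then follows by observing that the reduction map $\Phi$ is an injection between finite sets of equal cardinality $N=\prod_i n_i$, hence a bijection. The only caveat is presentational: you say the argument ``avoids any explicit use of B\'ezout's identity,'' but the divisibility lemma you rely on ($\gcd(a,b)=1$ and $a,b\mid m$ imply $ab\mid m$) is itself standardly proved via B\'ezout (or unique factorisation), so the dependence is merely pushed into that lemma rather than eliminated. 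That does not affect correctness. (Incidentally, the theorem as stated in the paper has a small indexing slip --- the congruences should range over $1\leq i\leq k$, matching the indexing of the $n_i$ --- which your proof implicitly corrects.)
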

\end{CJK*}

\begin{definition}
Let $M \models T_{\MSO(\Fin)}$ be an $\lang{\msotimes}$-structure. Let $d \in \bb{N}_{>0}$. Then for each $0 < h \leq d$ we define $\rho_{d,h}$ to be a sentence saying there exists a partition of $\At(M)$ by $d$ non-bottom elements $A_1,\ldots,A_d \in M$, such that,
\begin{enumerate}
\item $M \models A_1(0)$,
\item for all $a \in \At(M)$ and $0 < h \leq d$, if $M \models a \neq 0^* \wedge A_h(a)$ then $M \models A_{h'}(a')$ where $a'$ is the successor of $a$ in the linear ordering of the atoms of $M$, and $h'$ is the successor of $h$ in the cyclic ordering of $\{1,\ldots,d\}$,
\item $M \models A_h(0^*)$.
\end{enumerate}
\end{definition}

\begin{remark}
Let $n,d \in \bb{N}_{>0}$ be such that $n \geq d$. Then for each $0 < h \leq d$ we have $\MSO(n) \models \rho_{d,h}$ if and only if $n \equiv h \mod d$.
\end{remark}

\begin{definition}
For each $n \in \bb{N}$ we write $\psi_{>n}$ for the $\lang{\msotimes}$-sentence saying that there exist at least $n+1$ distinct atoms, and $\psi_{=n}$ for the sentence saying that there exist precisely $n$ distinct atoms.
\end{definition}

\begin{lemma}\label{UniqueResLemma}
Let $M \models T_{\MSO(\Fin)}$ be infinite. Then for each $d \in \bb{N}_{>0}$, there is precisely one $0 < h \leq d$ such that $M \models \rho_{d,h}$. 
\end{lemma}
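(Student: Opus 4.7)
The plan is to establish existence and uniqueness of $h$ separately, both via the restricted induction principle \cref{ResIndLemma} applied to suitable $\lang{\msotimes}$-formulas, combined with the comprehension schema. It will be convenient to drop the ``non-bottom'' requirement on the $A_i$ during the construction and recover it at the end from the assumption that $M$ is infinite.

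For existence, I plan to apply \cref{ResIndLemma} to the formula $\phi(x)$ (in a single free atom variable $x$) asserting: there exist $A_1,\ldots,A_d \in M$ that together contain precisely the atoms $\le x$, are pairwise disjoint, satisfy $0 \in A_1$, obey the cyclic successor condition at all atoms strictly below $x$, and contain $x$ in some class. The base case $\phi(0)$ is witnessed by $A_1 = 0$ (the smallest atom viewed as a Boolean element) and $A_i = \bot$ for $i > 1$. The inductive step extends a partition of the atoms $\le x$ with $x \in A_j$ to one for the atoms $\le s(x)$ by adjoining $s(x)$ to the next class in the cyclic ordering of $\{1,\ldots,d\}$; the enlarged class lies in $M$ by comprehension. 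Specialising the conclusion of \cref{ResIndLemma} at $x = 0^*$ produces a partition of all of $\At(M)$ with the required properties. When $M$ is infinite each class contains $s^{i-1}(0)$ and is therefore non-bottom, so the partition witnesses $\rho_{d,h}$ for the $h$ such that $0^* \in A_h$.

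For uniqueness, suppose $\{A_1,\ldots,A_d\}$ witnesses $\rho_{d,h}$ and $\{A'_1,\ldots,A'_d\}$ witnesses $\rho_{d,h'}$. I plan to apply \cref{ResIndLemma} to the $\lang{\msotimes}$-formula $\chi(x)$ asserting: for all $d$-tuples $\bar B, \bar B' \in M^d$ both satisfying the partition-plus-conditions-(1)-and-(2) hypotheses from the definition of $\rho_{d,h}$, the unique index $k$ with $x \in B_k$ equals the unique index $k'$ with $x \in B'_{k'}$. The base case holds because disjointness together with $0 \in B_1 \cap B'_1$ forces $k = k' = 1$. The inductive step is immediate from condition (2): if $\bar B$ and $\bar B'$ both have class index $k$ at $x$ (by the inductive hypothesis), the cyclic successor rule forces both to have class index equal to the cyclic successor of $k$ at $s(x)$. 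Specialising the conclusion at $x = 0^*$ yields $h = h'$. The main point to verify is that $\chi(x)$ can be written as a genuine $\lang{\msotimes}$-formula with $x$ as its only free variable, which is fine since \cref{ResIndLemma} tolerates arbitrary first-order quantification over Boolean algebra elements inside the induction formula, and ``for all candidate pairs of partitions'' is exactly such a quantification.
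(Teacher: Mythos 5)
Your proof is correct, but it takes a genuinely different route from the paper. The paper's proof is a transfer argument: it observes that the single sentence $\psi_{>d} \rightarrow \bigvee_{0<h\leq d}(\rho_{d,h} \wedge \bigwedge_{k\neq h}\neg\rho_{d,k})$ holds in every finite model $\MSO(n)$ (obviously, since $n$ has a unique residue mod $d$ once $n>d$), and then invokes \cref{TMFinMainTheorem} to conclude that the sentence is a consequence of $T_{\MSO(\Fin)}$, hence holds in the infinite model $M$, which satisfies $\psi_{>d}$. You instead give a direct, model-internal argument: restricted induction (\cref{ResIndLemma}) applied to a formula building the partition atom by atom for existence, and to a formula comparing two candidate partitions for uniqueness, with comprehension supplying the extended classes at each inductive step. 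Both arguments are sound. The paper's version is a one-liner given the axiomatisation theorem and exemplifies the general principle that any first-order property of all sufficiently large finite linear orders automatically transfers to every model of $T_{\MSO(\Fin)}$; your version is more laborious but entirely elementary and self-contained, deriving the lemma from the axioms alone without routing through the Ehrenfeucht--Fra\"isse machinery behind \cref{TMFinMainTheorem}, and so would remain valid even if stated before \cref{SecAxioms}. The only points that needed care in your write-up --- that the inductively constructed classes are elements of $M$ (comprehension with parameters), that infiniteness of $M$ guarantees all $d$ classes are non-bottom, and that the universal quantification over pairs of candidate partitions in the uniqueness formula keeps $x$ as the only free variable --- are all handled correctly.
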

\begin{proof}
Let $d \in \bb{N}_{>0}$, by \cref{TMFinMainTheorem},
\[
T_{\MSO(\Fin)} \models \psi_{>d} \rightarrow \bigvee_{0 < h \leq d} (\rho_{d,h} \wedge \bigwedge_{k \neq h} \neg \rho_{d,k}),
\]
holds if and only if,
\[
\MSO(n) \models \bigvee_{0 < h \leq d} (\rho_{d,h} \wedge \bigwedge_{k \neq h} \neg \rho_{d,k}),
\]
for each $n > d$. The latter is obviously true. Now if $M$ is infinite then $M \models \psi_{>d}$ for each $d \in \bb{N}_{>1}$, so we are done. 
\end{proof}

\begin{definition}
Let $M \models T_{\MSO(\Fin)}$ be infinite. Then for $d \in \bb{N}_{>0}$ we define the residue of $M$ modulo $d$ to be the unique $0 < h \leq d$ such that $M \models \rho_{d,h}$.
\end{definition}

\begin{definition}\label{DefResFunc}
A \textbf{residue function} is a function $r:\bb{P} \times \bb{N}_{>0} \rightarrow \bb{N}$ such that for $(p,j) \in \bb{P} \times \bb{N}_{>0}$,
\begin{enumerate}
\item $r(p,j)<p^j$ and,
\item $r(p,j+1) \equiv r(p,j) \mod p^j$.
\end{enumerate}
\end{definition}

\begin{proposition}\label{ResUniqueExtProp}
Let $r: \bb{P} \times \bb{N}_{>0} \rightarrow \bb{N}$ be a residue function. Then there is a unique function $\tilde{r}: \bb{N}_{>1} \rightarrow \bb{N}$ such that for each $d \in \bb{N}_{>1}$,
\begin{enumerate}
\item if $p \in \bb{P}$ and $j \in \bb{N}_{>0}$ are such that $p^j \mid d$, then $\tilde{r}(d) \equiv r(p,j) \mod p^j$,
\item $\tilde{r}(d) < d$.
\end{enumerate}
\end{proposition}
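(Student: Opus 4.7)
The plan is to define $\tilde{r}(d)$ for each $d > 1$ via prime factorisation combined with the Chinese remainder theorem, and then to verify that the compatibility condition built into the definition of a residue function guarantees the desired congruences hold not just for the maximal prime powers dividing $d$ but for every prime power dividing $d$. Uniqueness will fall out of uniqueness in \cref{CRT}.

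More concretely, fix $d \in \bb{N}_{>1}$ and write its prime factorisation $d = p_1^{j_1}\cdots p_k^{j_k}$ with the $p_i$ distinct primes and $j_i \in \bb{N}_{>0}$. The prime powers $p_1^{j_1},\ldots,p_k^{j_k}$ are pairwise coprime, so \cref{CRT} supplies a unique $\tilde{r}(d) \in \bb{N}$ with $0 \leq \tilde{r}(d) < d$ satisfying $\tilde{r}(d) \equiv r(p_i,j_i) \mod p_i^{j_i}$ for each $1 \leq i \leq k$. This takes care of (2) directly, and of (1) in the special case where the exponent $j$ is maximal among the powers of $p$ dividing $d$.

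The main obstacle, and the only nontrivial step, is to verify condition (1) in general: namely that if $p \in \bb{P}$ and $j \in \bb{N}_{>0}$ are such that $p^j \mid d$, then $\tilde{r}(d) \equiv r(p,j) \mod p^j$, even when $p^j$ is not the largest power of $p$ dividing $d$. Writing $p = p_i$ and letting $j_i$ be the exponent of $p_i$ in the factorisation of $d$, we have $j \leq j_i$. By construction $\tilde{r}(d) \equiv r(p_i,j_i) \mod p_i^{j_i}$, and hence a fortiori $\tilde{r}(d) \equiv r(p_i,j_i) \mod p_i^{j}$. It therefore suffices to check $r(p_i,j_i) \equiv r(p_i,j) \mod p_i^{j}$, which follows by a straightforward induction on $j_i - j$ from the second clause in the definition of a residue function (\cref{DefResFunc}), since each step $r(p,\ell+1) \equiv r(p,\ell) \mod p^\ell$ gives in particular $r(p,\ell+1) \equiv r(p,\ell) \mod p^j$ whenever $\ell \geq j$.

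Uniqueness is then immediate: if $\tilde{r}'$ also satisfies (1) and (2), then for any $d > 1$ with prime power factorisation $d = p_1^{j_1}\cdots p_k^{j_k}$ both $\tilde{r}(d)$ and $\tilde{r}'(d)$ are congruent to $r(p_i,j_i)$ modulo $p_i^{j_i}$ for each $i$, so $\tilde{r}(d) \equiv \tilde{r}'(d) \mod d$ by the uniqueness clause in \cref{CRT}, and since both lie in $\{0,\ldots,d-1\}$ we conclude $\tilde{r}(d) = \tilde{r}'(d)$.
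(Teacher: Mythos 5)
Your proposal is correct and follows essentially the same route as the paper: existence via the Chinese remainder theorem applied to the maximal prime powers in the factorisation of $d$, the descent to non-maximal prime powers via clause (2) of \cref{DefResFunc}, and uniqueness via the uniqueness clause of \cref{CRT}. Your write-up is if anything slightly more careful than the paper's, making the induction on $j_i - j$ explicit where the paper compresses it into a single chain of congruences.
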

\begin{proof}
First we can establish uniqueness, suppose for contradiction that $q: \bb{N}_{>1} \rightarrow \bb{N}$ and $s: \bb{N}_{>1} \rightarrow \bb{N}$ satisfy the above properties, but that $q(d) \neq s(d)$ for some $d > 1$. Then there is a prime power $p^j$ such that,
\begin{enumerate}
\item $p^j \mid d$,
\item $q(d) \not\equiv s(d) \mod p^j$,
\end{enumerate}  
by \cref{CRT}. But on the other hand $q(d) \equiv r(p,j) \mod p^j$ and $r(p,j) \equiv s(d) \mod p^j$ so by transitivity we have $q(d) \equiv s(d) \mod p^j$, hence a contradiction. This establishes uniqueness.

For existence, we use \cref{CRT}. For each $n \in \bb{N}_{>1}$ we need to be able to choose $\tilde{r}(d) \in \{0,\ldots,d-1\}$ such that for each prime power $p^j$ with $p^j \mid d$ we have $\tilde{r}(d) \equiv r(p,j) \mod p^j$. Take all prime powers $p^j$ such that $p^j \mid n$ and $p^{j+1} \nmid d$, these are clearly coprime so \cref{CRT} tells us there is an element $\tilde{r}(d) < d$ such that $\tilde{r}(d) \equiv r(p,j) \mod p^j$ for each. Then if $p^i$ is a prime power with $p^i \mid d$, we get $\widetilde{r}(d) \equiv r(p,j) \equiv r(p,i) \mod p^i$ where $j$ is maximal such that $p^j \mid d$, the second congruence comes from our assumption that $r$ is a residue function.
\end{proof}

\begin{notation}
We will abuse notation and call a function $\tilde{r}:\bb{N}_{>1} \rightarrow \bb{N}$ a residue function if there exists some residue function $r:\bb{P} \times \bb{N}_{>0} \rightarrow \bb{N}$ such that $r$ and $\tilde{r}$ satisfy the conditions in \cref{ResUniqueExtProp}.
\end{notation}

\begin{lemma}\label{ModelGivesResLemma}
Let $M \models T_{\MSO(\Fin)}$ be infinite and let $\tilde{r}_M:\bb{N}_{>1} \rightarrow \bb{N}$ be the function given by taking $\tilde{r}_M(d)$ to be the unique $0 < h \leq d$ such that $M \models \rho_{d,h}$ for each $d \in \bb{N}_{>1}$. Then $\tilde{r}_M$ is a residue function. 
\end{lemma}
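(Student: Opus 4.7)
The plan is to exhibit a residue function $r:\bb{P}\times\bb{N}_{>0}\to\bb{N}$ in the strict sense of \cref{DefResFunc} such that the canonical extension furnished by \cref{ResUniqueExtProp} matches $\tilde{r}_M$, with the convention that the value $d$ for $\tilde{r}_M(d)$ is identified with $0$ modulo $d$. The obvious candidate is to take $r(p,j)$ to be the representative in $\{0,\dots,p^j-1\}$ of $\tilde{r}_M(p^j)$ modulo $p^j$.

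The heart of the argument will be a \emph{divisibility claim}: whenever $d,d'\in\bb{N}_{>1}$ and $d\mid d'$, we have $\tilde{r}_M(d)\equiv \tilde{r}_M(d')\pmod d$. Granted this claim, condition~(2) of \cref{DefResFunc}, namely $r(p,j+1)\equiv r(p,j)\pmod{p^j}$, is just the instance $d=p^j$, $d'=p^{j+1}$, while condition~(1) is baked into the definition of $r$, so $r$ is a residue function. A second application of the claim, to the pair $p^j\mid d$ for each prime power divisor of $d>1$, shows that $\tilde{r}_M(d)\equiv r(p,j)\pmod{p^j}$, so the mod-$d$ reduction of $\tilde{r}_M$ satisfies the two defining conditions of the extension of $r$ from \cref{ResUniqueExtProp}; the uniqueness clause there then identifies $\tilde{r}_M$ (up to the convention) with that extension, completing the proof.

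For the divisibility claim itself there are two natural routes. The first works inside $M$ directly: using \cref{UniqueResLemma}, fix a witnessing partition $(A'_1,\dots,A'_{d'})$ for $\rho_{d',\tilde{r}_M(d')}$, and use comprehension to form the coarser partition $B_i := \bigcup\{A'_j : 1\le j\le d',\ j\equiv i\pmod d\}$ for $1\le i\le d$. A routine check confirms that $B_1,\dots,B_d$ witnesses $\rho_{d,h}$ for the unique $h\in\{1,\dots,d\}$ with $h\equiv \tilde{r}_M(d')\pmod d$, and the uniqueness clause of \cref{UniqueResLemma} then forces $\tilde{r}_M(d)=h$. The alternative route is to invoke \cref{TMFinMainTheorem}: for each pair $h\in\{1,\dots,d\}$ and $h'\in\{1,\dots,d'\}$ with $h\not\equiv h'\pmod d$, the $\lang{\msotimes}$-sentence $\psi_{>d'}\to\neg(\rho_{d,h}\wedge\rho_{d',h'})$ is true in every $\MSO(n)$ by elementary arithmetic, hence lies in $T_{\MSO(\Fin)}$ and therefore holds in $M$.

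The only non-routine step is the divisibility claim; the rest is mechanical unfolding of definitions. The first route is arguably cleaner, as it stays entirely inside $M$ and requires nothing beyond the comprehension scheme that has been available since \cref{SecPrelim}.
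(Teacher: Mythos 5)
Your proof is correct, and it is in fact somewhat more complete than the one in the paper. The paper's argument is exactly your second route, but applied only to the prime-power pairs $p^j \mid p^{j+1}$: it reformulates $\tilde{r}_M(p^{j+1}) \equiv \tilde{r}_M(p^j) \bmod p^j$ as $M \models \bigvee_{i \in S_{p,j}} \rho_{p^{j+1},i}$ and transfers the implication $\psi_{>p^{j+1}} \rightarrow (\rho_{p^j,i} \rightarrow \bigvee_{i' \in S_{p,j}} \rho_{p^{j+1},i'})$ from sufficiently large standard models via \cref{TMFinMainTheorem}. It thereby verifies condition (2) of \cref{DefResFunc} for the restriction of $\tilde{r}_M$ to prime powers but leaves implicit the fact that $\tilde{r}_M(d)$ for composite $d$ agrees with the canonical extension of \cref{ResUniqueExtProp}; your general divisibility claim ($d \mid d'$ implies $\tilde{r}_M(d) \equiv \tilde{r}_M(d') \bmod d$) covers both points at once, and you are also right to flag the convention identifying the value $h = d$ with $0$ modulo $d$, which the paper passes over silently. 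Your first route --- coarsening a witnessing partition for $\rho_{d',\tilde{r}_M(d')}$ into the blocks $B_i$ and invoking the uniqueness clause of \cref{UniqueResLemma} --- is genuinely different from the paper's: it works entirely inside $M$ using only finite Boolean joins (comprehension is not even needed for that step) and avoids any appeal to \cref{TMFinMainTheorem}, at the cost of a slightly longer hands-on verification that the $B_i$ satisfy the three clauses of $\rho_{d,h}$. What the paper's transfer argument buys is brevity and uniformity with the surrounding lemmas, which all reduce statements about arbitrary models to statements about the $\MSO(n)$; what your internal argument buys is independence from the axiomatisation theorem, which is conceptually cleaner if one wants this lemma to stand on its own.
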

\begin{proof}
We need to check that $\tilde{r}_M(p^{j+1}) \equiv \tilde{r}_M(p^j) \mod p^j$ for each $p \in \bb{P}$ and $j \in \bb{N}_{>0}$. Fix a particular $p \in \bb{P}$ and $j \in \bb{N}_{>0}$ and let $S_{p,j}$ be the finite set $\{i < p^{j+1}: i \equiv r_M(p^j) \mod p^j\}$. We can now reformulate  the condition,
\[
\tilde{r}_M(p^{j+1}) \equiv \tilde{r}_M(p^j) \mod p^j,
\]
as,
\[
M \models \bigvee_{i \in S_{p,j}} \rho_{p^{j+1},i}. 
\]
By considering what happens in sufficiently large standard models and using \cref{TMFinMainTheorem} we get that for each $p \in \bb{P}$ and $j \in \bb{N}_{>0}$,
\[
T_{\MSO(\Fin)} \models \psi_{>p^{j+1}} \rightarrow (\rho_{p^j,i} \rightarrow \bigvee_{i \in S_{p,j}} \rho_{p^{j+1},i}).
\]
Therefore we are done.
\end{proof}

\begin{definition}
Let $\tilde{r}:\bb{N}_{>1} \rightarrow \bb{N}$ be a residue function. Then we define an $\lang{\msotimes}$-theory,
\[
T_{\tilde{r}} \coloneqq T_{\MSO(\Fin)} \cup T_{\infty} \cup \{\rho_{d,\tilde{r}(d)}:d \in \bb{N}_{>1}\}.
\]
Here $T_{\infty}$ comprises, for each $n \in \bb{N}$, the sentence $\psi_{>n}$.
\end{definition}

\begin{proposition}\label{ResConsistentProp}
For each residue function $\tilde{r}$, the $\lang{\msotimes}$-theory $T_{\tilde{r}}$ is consistent. 
\end{proposition}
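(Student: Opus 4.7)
The plan is to prove consistency via the compactness theorem. A finite subset $\Delta \subseteq T_{\tilde{r}}$ mentions only finitely many axioms of $T_{\MSO(\Fin)}$, finitely many sentences from $T_{\infty}$ (hence is implied by $\psi_{>N}$ for some $N \in \bb{N}$), and finitely many sentences $\rho_{d_1,\tilde{r}(d_1)}, \ldots, \rho_{d_k,\tilde{r}(d_k)}$. By \cref{TMFinMainTheorem} any standard model $\MSO(n)$ satisfies all of $T_{\MSO(\Fin)}$, and clearly $\MSO(n) \models \psi_{>N}$ whenever $n > N$. So it will suffice to find arbitrarily large $n \in \bb{N}$ such that $\MSO(n) \models \rho_{d_i,\tilde{r}(d_i)}$ for each $i$, and by the remark immediately following the definition of $\rho_{d,h}$ this reduces to finding $n > N$ with $n \equiv \tilde{r}(d_i) \bmod d_i$ for every $i$.

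First I would reduce the family of congruences to prime-power congruences. Let $L = \lcm(d_1,\ldots,d_k)$ and factor $L = \prod_{p \in P} p^{j_p}$ where $P$ is the finite set of primes dividing $L$ and $j_p$ is the maximal exponent. Since the integers $p^{j_p}$ for $p \in P$ are pairwise coprime, \cref{CRT} produces some $n_0 \in \{0,\ldots,L-1\}$ with $n_0 \equiv r(p,j_p) \bmod p^{j_p}$ for every $p \in P$, where $r$ is a residue function inducing $\tilde{r}$ as in \cref{ResUniqueExtProp}. Replacing $n_0$ by $n_0 + mL$ for large enough $m$ lets us assume the chosen $n$ is as large as desired, in particular larger than $N$ and larger than $\max_i d_i$.

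Next I would check that this $n$ satisfies $n \equiv \tilde{r}(d_i) \bmod d_i$ for each $i$. Fix $d = d_i$ and a prime $p$ with $p^{\ell} \| d$ (so $\ell \leq j_p$). From the defining property of $r$ as a residue function we have $r(p,j_p) \equiv r(p,\ell) \bmod p^{\ell}$, so $n \equiv r(p,\ell) \bmod p^{\ell}$, and by \cref{ResUniqueExtProp} $r(p,\ell) \equiv \tilde{r}(d) \bmod p^{\ell}$. Hence $n \equiv \tilde{r}(d) \bmod p^{\ell}$ for every prime power $p^{\ell} \| d$, and another application of \cref{CRT} to these pairwise coprime prime-power moduli yields $n \equiv \tilde{r}(d) \bmod d$, as required.

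With such an $n$ chosen, $\MSO(n)$ is a model of $\Delta$, so $T_{\tilde{r}}$ is finitely satisfiable and therefore consistent by compactness. The only delicate step is the second one, in which the two coherence conditions that qualify $\tilde{r}$ as a residue function (compatibility of $r(p,j)$ across $j$, and the CRT-based gluing in \cref{ResUniqueExtProp}) have to be threaded together correctly; everything else is either a direct appeal to \cref{TMFinMainTheorem} or a straightforward use of the remark characterising when $\MSO(n) \models \rho_{d,h}$.
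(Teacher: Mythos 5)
Your proof is correct and follows exactly the route the paper intends: the paper's own proof is just the one-line remark that finite satisfiability of $T_{\tilde{r}}$ is ``straightforward from \cref{CRT}'', and your write-up supplies precisely the details behind that remark (reducing a finite subset to finitely many congruences, gluing them via \cref{CRT} using the coherence conditions on $\tilde{r}$, and realising the result in a large standard model $\MSO(n)$).
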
 
\begin{proof}
That each finite subset of $T_{\tilde{r}}$ is consistent is straightforward from \cref{CRT}. 
\end{proof}

\begin{definition}\label{FSpecDef}
Let $\lang{}$ be a first order language, $T$ an $\lang{}$-theory and $\phi$ an $\lang{}$-sentence. Then $\FSpec_T(\phi) \subseteq \bb{N}$ is typically\footnote{See \cite{HodgesBible} page 542 for example, where the notation $\Sch(\phi)$ is used.} defined to be the set of natural numbers,
\[
\FSpec_T{\phi} = \{n \in \bb{N}: \text{ there is } M \models T \cup \{\phi\} \text{ such that }\dvert{M} = n\}.
\]
We will modify this definition as follows, for us,
\[
\FSpec_T{\phi} = \{n \in \bb{N}: \text{ there is } M \models T \cup \{\phi\} \text{ such that }\dvert{M} = 2^n\}.
\]
This modification, when working over the theory of Boolean algebras, essentially means we choose to measure a (finite) structure by looking at the number of atoms rather than using cardinality. Usually we will omit $T$ from the notation when it is clear from the context.
\end{definition}

\begin{definition}\label{DefUltPer}
Recall that a subset $S \subseteq \bb{N}$ is \textbf{ultimately periodic} if there exist $N \in \bb{N}$ and $d \in \bb{N}_{>0}$ such that for all $n \in \bb{N}_{>N}$, $n \in S$ if and only if $n+d \in S$.
\end{definition}

\begin{proposition}\label{SPECUPThm}
For each $\lang{\msotimes}$-sentence $\phi$, the set $\FSpec_{T_{\MSO(\Fin)}}(\phi)$ is ultimately periodic.
\end{proposition}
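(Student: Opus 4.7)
The plan is to reduce the question about $\FSpec_{T_{\MSO(\Fin)}}(\phi)$ to one about the sequence of $\approx_k$-classes of the standard models $\MSO(n)$. First, I would observe that since any Boolean algebra has cardinality either infinite or a power of $2$, and since $\MSO(n)$ is the unique (up to isomorphism) finite model of $T_{\MSO(\Fin)}$ with $2^n$ elements, we have
\[
\FSpec_{T_{\MSO(\Fin)}}(\phi) = \{n \in \bb{N} : \MSO(n) \models \phi\}.
\]
So it suffices to show that this set is ultimately periodic.

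Let $k$ be the quantifier rank of some unnested $\lang{\msotimes}$-sentence logically equivalent to $\phi$ (the passage to an unnested form may increase quantifier rank but only finitely). Whether $\MSO(n) \models \phi$ depends only on the $\approx_k$-equivalence class of $\MSO(n)$. The Fra\"isse-Hintikka theorem (\cite{HodgesBible} Theorem 3.3.2) tells us that $\approx_k$ has only finitely many equivalence classes, so the sequence $n \mapsto [\MSO(n)]_{\approx_k}$ takes only finitely many values.

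Next I would establish the crucial compatibility: if $\MSO(n) \approx_k \MSO(m)$ then $\MSO(n+1) \approx_k \MSO(m+1)$. This is immediate from \cref{SumEFThm}, using the trivial fact that $\MSO(1) \approx_k \MSO(1)$ together with $\MSO(n) \msotimes \MSO(1) \cong \MSO(n+1)$ (\cref{PlusExtendsStandard}). So $\approx_k$ is a congruence for the successor step in the sequence.

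Finally I would apply pigeonhole: since the sequence $[\MSO(n)]_{\approx_k}$ takes only finitely many values, there exist $n_0 < n_1$ with $\MSO(n_0) \approx_k \MSO(n_1)$. By the previous paragraph and induction, $\MSO(n_0+i) \approx_k \MSO(n_1+i)$ for every $i \geq 0$. Setting $N = n_0$ and $d = n_1 - n_0$, we obtain that $\MSO(n) \models \phi$ if and only if $\MSO(n+d) \models \phi$ for every $n \geq N$, which is exactly ultimate periodicity of $\FSpec_{T_{\MSO(\Fin)}}(\phi)$ in the sense of \cref{DefUltPer}. No step here is really an obstacle; the whole argument is a clean combination of Fra\"isse-Hintikka finiteness with the additivity result \cref{SumEFThm}.
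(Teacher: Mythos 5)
Your argument is correct, but it takes a genuinely different route from the paper. The paper proves \cref{SPECUPThm} by appealing to the classical correspondence between monadic second order sentences and finite automata (citing \cite{KNAutomataBook}): each $\lang{\msotimes}$-sentence yields a DFA over a one-letter alphabet whose accepted lengths are exactly $\FSpec_{T_{\MSO(\Fin)}}(\phi)$, and such length sets are ultimately periodic; the details are explicitly left out. You instead give a self-contained argument entirely inside the machinery the paper has already built: reduce $\FSpec_{T_{\MSO(\Fin)}}(\phi)$ to $\{n : \MSO(n)\models\phi\}$ (which is legitimate, since a finite Boolean algebra of cardinality $2^n$ with linearly ordered atoms satisfying the axioms must be $\MSO(n)$), pass to an unnested equivalent of quantifier rank $k$, use Fra\"isse--Hintikka finiteness of $\approx_k$-classes, use \cref{SumEFThm} together with \cref{PlusExtendsStandard} to show that $\MSO(n)\approx_k\MSO(m)$ implies $\MSO(n+1)\approx_k\MSO(m+1)$, and finish by pigeonhole. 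Each step is sound, and the conclusion matches \cref{DefUltPer}. What your approach buys is independence from automata theory, using only tools the paper has proved (indeed it is essentially the same style of argument as \cref{TMFinEFStandardThm}); what the paper's approach buys is brevity and a pointer to the standard Büchi-style literature, at the cost of importing an unproved black box. Your proof could reasonably replace the paper's citation-based one.
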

\begin{proof}
This is a well known consequence of the correspondence between finite automata and monadic second order versions of linear orders (see for example chapter 2 section 10 of \cite{KNAutomataBook}, in particular theorem 2.10.1 on page 113 and theorem 2.10.3 on page 120). For each $\lang{\msotimes}$-sentence $\phi$ a deterministic finite automata over a singleton alphabet can be constructed, such that $\FSpec_{T_{\MSO(\Fin)}}(\phi)$ is precisely the set of lengths of words accepted by the automaton. It is well known and straightforward to show that such sets are ultimately periodic, but to go further would require defining what deterministic finite automata are and so we will not. 
\end{proof}

\begin{theorem}\label{ThmTMFinSentences}
For every $\lang{\msotimes}$-sentence $\phi$ there exist $N \in \bb{N}$, $d \in \bb{N}_{>0}$, $\mfrak{i} \subseteq \{0,\ldots,N\}$ and $\mfrak{r} \subseteq \{1,\ldots,d\}$ such that,
\[
T_{\MSO(\Fin)} \models \phi \leftrightarrow (\bigvee_{i \in \mfrak{i}} \psi_{=i} \vee (\psi_{>N} \wedge \bigvee_{h \in \mfrak{r}} \rho_{d,h})).
\]
\end{theorem}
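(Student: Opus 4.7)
The plan is to reduce the statement to verifying the equivalence $\phi \leftrightarrow \psi$ (for a $\psi$ of the asserted shape) in every finite standard model $\MSO(n)$, and then appeal to \cref{TMFinMainTheorem} applied to the biconditional $\phi \leftrightarrow \psi$. The data $(N,d,\mfrak{i},\mfrak{r})$ is extracted from the ultimate periodicity of $\FSpec_{T_{\MSO(\Fin)}}(\phi)$ given by \cref{SPECUPThm}. Concretely, I would set $S = \{n \in \bb{N} : \MSO(n) \models \phi\}$ and pick $N \in \bb{N}$ and $d \in \bb{N}_{>0}$ witnessing ultimate periodicity of $S$ as in \cref{DefUltPer}, then define
\[
\mfrak{i} = S \cap \{0,1,\ldots,N\}, \qquad \mfrak{r} = \{h \in \{1,\ldots,d\} : n \in S \text{ for some (equivalently every) } n > N \text{ with } n \equiv h \pmod d\},
\]
using the convention $h \in \{1,\ldots,d\}$ (so $n \equiv 0 \pmod d$ corresponds to $h = d$) that matches the remark stating $\MSO(n) \models \rho_{d,h}$ iff $n \equiv h \pmod d$. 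Let $\psi$ denote the right-hand side sentence in the theorem statement.

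Next I would verify $\MSO(n) \models \phi \leftrightarrow \psi$ for every $n \in \bb{N}$ by splitting on whether $n \leq N$ or $n > N$. If $n \leq N$ then $\MSO(n) \not\models \psi_{>N}$ and $\MSO(n) \models \psi_{=i}$ iff $i = n$, so $\MSO(n) \models \psi$ iff $n \in \mfrak{i}$ iff $n \in S$ iff $\MSO(n) \models \phi$. If $n > N$ then no $\psi_{=i}$ with $i \leq N$ holds in $\MSO(n)$ while $\psi_{>N}$ does, and by the remark cited above $\MSO(n)$ satisfies exactly one $\rho_{d,h}$, namely the one with $n \equiv h \pmod d$; hence $\MSO(n) \models \psi$ iff this particular $h$ lies in $\mfrak{r}$, which by definition of $\mfrak{r}$ and the ultimate periodicity of $S$ is equivalent to $n \in S$, i.e.\ to $\MSO(n) \models \phi$. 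Applying \cref{TMFinMainTheorem} to the sentence $\phi \leftrightarrow \psi$ then gives $T_{\MSO(\Fin)} \models \phi \leftrightarrow \psi$, which is the required conclusion (and automatically covers infinite models of $T_{\MSO(\Fin)}$, for which $\mfrak{r}$ controls the truth of $\psi$ via \cref{UniqueResLemma}).

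There is no single hard obstacle once \cref{SPECUPThm,TMFinMainTheorem} are available; the proof is essentially bookkeeping that converts ultimate periodicity of the spectrum into the prescribed syntactic normal form. The main points requiring care are (i) aligning the residue convention so that $\rho_{d,h}$ correctly encodes $n \equiv h \pmod d$ at the boundary $h = d$, and (ii) noting that the case analysis on $n \leq N$ versus $n > N$ produces the two disjuncts in $\psi$ exactly, with the disjuncts being mutually exclusive across finite standard models so that no redundancy arises.
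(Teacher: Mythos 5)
Your proof is correct and follows essentially the same route as the paper's: extract $N$ and $d$ from the ultimate periodicity of $\FSpec(\phi)$ (\cref{SPECUPThm}), read off $\mfrak{i}$ and $\mfrak{r}$ from the small and large parts of the spectrum, verify the biconditional in every $\MSO(n)$, and transfer to all models via \cref{TMFinMainTheorem}. The only detail worth adding (which the paper's own proof also glosses over) is that $N$ should be enlarged to be at least $d$, since for $N < n < d$ the model $\MSO(n)$ satisfies no sentence $\rho_{d,h}$ at all, so the right-hand disjunct would fail there even when $n$ lies in the spectrum.
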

\begin{proof}
Let $\phi$ be an $\lang{\msotimes}$-sentence. By \cref{SPECUPThm} we have that $\FSpec(\phi)$ is ultimately periodic, i.e. there exists $N \in \bb{N}$ and $d \in \bb{N}_{>0}$ such that for all $n > N$, $n \in \FSpec(\phi)$ if and only if $n+d \in \FSpec(\phi)$. From this periodicity of $\FSpec(\phi)$, there is a finite set $\mfrak{r} \subseteq \{1,\ldots,d\}$ such that for each $n>N$, 
\[
\MSO(n) \models \phi \leftrightarrow \bigvee_{h \in \mfrak{r}} \rho_{d,h}.
\]
Let $\mfrak{i} = \FSpec(\phi) \cap \{0,\ldots,N\}$, so that for $n \leq N$ we have,
\[
\MSO(n) \models \phi \leftrightarrow \bigvee_{i \in \mfrak{i}} \psi_{=i}.
\]
Then using \cref{TMFinMainTheorem} we can combine these two observations to get,
\[
T_{\MSO(\Fin)} \models \phi \leftrightarrow (\bigvee_{i \in \mfrak{i}} \psi_{=i} \vee (\psi_{>N} \wedge \bigvee_{h \in \mfrak{r}} \rho_{d,h})),
\]
as required.
\end{proof}

\begin{theorem}\label{ThmTMFinCompletions}
For each residue sequence $\tilde{r}:\bb{N}_{>1} \rightarrow \bb{N}$, the theory $T_{\tilde{r}}$ is a completion of $T_{\MSO(\Fin)}$, moreover every non-standard completion is of this form. 
\end{theorem}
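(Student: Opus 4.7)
My approach is to prove two assertions separately: (i) each $T_{\tilde r}$ is a complete extension of $T_{\MSO(\Fin)}$, and (ii) every completion of $T_{\MSO(\Fin)}$ admitting an infinite model is of the form $T_{\tilde r}$ for some residue function $\tilde r$. I interpret \emph{non-standard completion} as a completion admitting an infinite model; this is equivalent to a completion not of the form $\Th(\MSO(n))$, because every finite model of $T_{\MSO(\Fin)}$ is isomorphic to some $\MSO(n)$. Consistency of $T_{\tilde r}$, so that (i) is not vacuous, has already been established in \cref{ResConsistentProp}.

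For (i), I would take an arbitrary $\lang{\msotimes}$-sentence $\phi$ and apply \cref{ThmTMFinSentences} to obtain $N \in \bb{N}$, $d \in \bb{N}_{>0}$, $\mfrak{i} \subseteq \{0,\ldots,N\}$ and $\mfrak{r} \subseteq \{1,\ldots,d\}$ such that modulo $T_{\MSO(\Fin)}$ the sentence $\phi$ is equivalent to $\bigvee_{i \in \mfrak{i}} \psi_{=i} \vee (\psi_{>N} \wedge \bigvee_{h \in \mfrak{r}} \rho_{d,h})$. Since $T_{\infty} \subseteq T_{\tilde r}$, every model of $T_{\tilde r}$ is infinite, so it refutes each $\psi_{=i}$ and satisfies $\psi_{>N}$; hence modulo $T_{\tilde r}$ the sentence $\phi$ is equivalent to $\bigvee_{h \in \mfrak{r}} \rho_{d,h}$. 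By \cref{UniqueResLemma} every infinite model of $T_{\MSO(\Fin)}$ satisfies exactly one $\rho_{d,h}$, and the axiom $\rho_{d,\tilde r(d)} \in T_{\tilde r}$ pins this down to the case $h = \tilde r(d)$. Thus the truth value of $\phi$ in any $M \models T_{\tilde r}$ is determined entirely by whether $\tilde r(d) \in \mfrak{r}$, a purely arithmetic condition independent of $M$, and so $T_{\tilde r}$ decides $\phi$.

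For (ii), I would take a completion $T$ of $T_{\MSO(\Fin)}$ with an infinite model $M$ and let $\tilde r_M:\bb{N}_{>1} \rightarrow \bb{N}$ be the residue function produced by \cref{ModelGivesResLemma}. Since $T$ is complete and $M \models T$, we have $\psi_{>n} \in T$ for every $n \in \bb{N}$ and $\rho_{d,\tilde r_M(d)} \in T$ for every $d \in \bb{N}_{>1}$; combined with $T_{\MSO(\Fin)} \subseteq T$, this gives $T_{\tilde r_M} \subseteq T$. By (i) the theory $T_{\tilde r_M}$ is already complete and $T$ is consistent, so equality holds.

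The main obstacle is really not technical: \cref{ThmTMFinSentences}, \cref{UniqueResLemma}, and \cref{ModelGivesResLemma} have already done the substantive work, and the proof amounts to assembling them. The only conceptual point to keep in mind is that in a completion of $T_{\MSO(\Fin)}$, having \emph{some} infinite model forces \emph{every} model to be infinite (by completeness, each $\psi_{>n}$ is either in the theory or its negation is), which is what justifies freely identifying non-standard completions with extensions of $T_{\infty}$.
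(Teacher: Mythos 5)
Your proposal is correct and follows essentially the same route as the paper: reduce an arbitrary sentence via \cref{ThmTMFinSentences}, use $T_{\infty}$ to discard the finite disjuncts, and let the axiom $\rho_{d,\tilde r(d)}$ (together with \cref{UniqueResLemma}) decide the remainder, with the converse direction assembled from \cref{ModelGivesResLemma}. Your write-up is in fact slightly more explicit than the paper's in spelling out why a non-standard completion must contain $T_{\infty}$ and in the maximality argument $T_{\tilde r_M} \subseteq T$ for part (ii), which the paper dispatches with ``follows easily.''
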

\begin{proof}
We must show that $T_{\tilde{r}}$ is complete for each residue function $\tilde{r}$. The rest of the statement, namely that every non-standard completion is of the form $T_{\tilde{r}}$ for some residue function $\tilde{r}$, then follows easily from \cref{ModelGivesResLemma}.

So let $\tilde{r}:\bb{N}_{>1} \rightarrow \bb{N}$ be a residue function. Let $\phi$ be an $\lang{\msotimes}$-sentence. By \cref{ThmTMFinSentences} there exist $N \in \bb{N}$, $d \in \bb{N}_{>0}$, $\mfrak{i} \subseteq \{0,\ldots,N\}$ and $\mfrak{r} \subseteq \{1,\ldots,d\}$ such that,
\[
T_{\MSO(\Fin)} \models \phi \leftrightarrow (\bigvee_{i \in \mfrak{i}} \psi_{=i} \vee (\psi_{>N} \wedge \bigvee_{h \in \mfrak{r}} \rho_{d,h})).
\]
But now recall that, by definition of $T_{\tilde{r}}$, $T_{\infty} \subseteq T_{\tilde{r}}$. Therefore,
\[
T_{\tilde{r}} \models \phi \leftrightarrow \bigvee_{h \in \mfrak{r}} \rho_{d,h}.
\]
Now recall that by the definition of $T_{\tilde{r}}$, $\tilde{r}(d)$ is the unique $h \in \{1,\ldots,d\}$ such that $\rho_{d,h} \in T_{\tilde{r}}$. Therefore $T_{\tilde{r}} \models \phi$ if and only if $\tilde{r}(d) \in \mfrak{r}$. This shows that $T_{\tilde{r}}$ is complete.
\end{proof}

\begin{remark}\label{RmkResidueAddition}
Let $\widetilde{\rho}_1,\widetilde{\rho}_2:\bb{N}_{>1} \rightarrow \bb{N}$ be residue functions. Define $\widetilde{\rho}_1 \msotimes \widetilde{\rho}_2:\bb{N}_{>1} \rightarrow \bb{N}$ by taking $(\widetilde{\rho}_1 \msotimes \widetilde{\rho}_2)(d)$ to be the unique element of $\{0,\ldots,d-1\}$ congruent to $\widetilde{\rho}_1(d) + \widetilde{\rho}_2(d)$ modulo $d$, for each $d \in \bb{N}_{>1}$. Then $\widetilde{\rho}_1 \msotimes \widetilde{\rho}_2$ is a residue function, and moreover we get,
\[
T_{\widetilde{\rho}_1} \msotimes T_{\widetilde{\rho}_2} = T_{\widetilde{\rho}_1 \msotimes \widetilde{\rho}_2}.
\]
\end{remark}

\clearpage
\section{Extended Stone duality and the free profinite monoid on one generator.}\label{SecProfinite}

In this section we will show that $(S_0(T_{\MSO(\Fin)}),\msotimes)$ the space of completions of the theory $T_{\MSO(\Fin)}$ together with the operation $\msotimes$ (see \cref{TMFinSumCor}), \emph{is}\footnote{To be fully rigorous, by `is' we mean that these two objects are isomorphic in the category of Boolean spaces equipped with compatible ternary relations described in \cref{ExtStoneDuality}.} the free profinite monoid on one generator. For an algebraic introduction to the free profinite monoid on one generator see \cite{AlmeidaProfinBook}, in particular the beginning of subsection 4.4 on page 96, with the case where $A$ is a singleton in mind.  

\begin{definition}
Let $L$ be a Boolean algebra, and let $\cdot:L^2 \rightarrow L$ be a binary operation such that,
\begin{enumerate}
\item for each $a \in L$, $a \cdot \bot = \bot \cdot a = \bot$,
\item for each $a,b,c \in L$, $(a \vee b) \cdot c = (a \cdot c) \vee (b \cdot c)$ and $a \cdot (b \vee c) = (a \cdot b) \vee (a \cdot c)$.
\end{enumerate} 
Then we say that $\cdot$ is a \textbf{normal additive binary operation}.
\ \\
Let $X$ be a Boolean space, and let $R \subseteq X^3$ be a ternary relation such that,
\begin{enumerate}
\item for each $A,B \in \Clop(X)$,
\[ 
A \cdot B \coloneqq \{z \in X:\exists x \in A,\exists y \in B \; R(x,y,z)\} \in \Clop(X),
\]
\item for each $z \in X$ the set $\{(x,y) \in X^2: R(x,y,z)\} \in \mcal{A}(X^2)$\footnote{Given a topological space $X$, $\mcal{A}(X)$ denotes the collection of closed subsets of $X$. Here $X^2$ is equipped with the product topology.}.
\end{enumerate}
Then we say that $R$ is a \textbf{compatible} ternary relation.
\end{definition}

\begin{theorem}[Extended Stone duality for Boolean algebras with normal additive binary operations.]\label{ExtStoneDuality}
The following categories are dual to each other, 
\begin{enumerate}
\item the category of Boolean algebras equipped with a normal additive binary operation,
\item the category of Boolean spaces equipped with a compatible ternary relation.
\end{enumerate}
Morphisms in the first category are $\lang{}=\{\vee,\wedge,\bot,\top,^c,\cdot\}$-homomorphisms. Morphisms in the second category are more complicated, if $(X,R)$ and $(Y,S)$ are objects in the second category then a function $f:X \rightarrow Y$ is a morphism\footnote{\cite{Goldblatt} refers to these maps as `bounded morphisms'.} if,
\begin{enumerate}
\item $f$ is continuous,
\item for all $x,y,z \in X$, $R(x,y,z)$ implies $S(f(x),f(y),f(z))$, and,
\item for any $z \in X$, if $x',y' \in Y$ with $S(x',y',f(z))$ then there exists $x,y \in X$ such that $f(x)=x'$, $f(y)=y'$, and $R(x,y,z)$.\footnote{See \cite{Hansoul} definition 1.10 and comments following the definition on page 41, this definition is only suitable for the Boolean case, as Boolean spaces are Hausdorff, for spectral spaces we must change the definition to account for the topology, and for Priestley spaces the definition must be changed to account for the ordering.}
\end{enumerate}
This duality is witnessed by the following functors.
\begin{enumerate}
\item Let $(L,\cdot)$ be a Boolean algebra equipped with a finite join preserving operation. Send this to the Boolean space $X$ comprising prime filters of $L$ together with the Zariski topology, equipped with $R_{\bullet} \subseteq X^3$ given by,
\[
R_{\bullet}(x,y,z) \Leftrightarrow x \lift{\cdot} y \subseteq z,
\]
where $x \lift{\cdot} y \coloneqq \{a \cdot b: a \in x,b \in y\}$.
\item Let $(X,R)$ be a Boolean space equipped with a compatible ternary relation. Send this to the Boolean algebra $\Clop(X)$ comprising clopen subsets of $X$, equipped with $\cdot:\Clop(X)^2 \rightarrow \Clop(X)$ given by,
\[
A \cdot B \coloneqq \{z \in X:\exists x \in A,\exists y \in B \; R(x,y,z)\}.
\]
\end{enumerate}
Both categories are concrete, and morphisms are sent to the corresponding preimage maps. When we have either a Boolean algebra with a finite join preserving operation or a Boolean space with a compatible ternary relation, we call its image under the appropriate functor the `\textbf{extended Stone dual}'.
\end{theorem}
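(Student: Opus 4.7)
The plan is to build on classical Stone duality for Boolean algebras, which already establishes the contravariant equivalence between Boolean algebras (with Boolean homomorphisms) and Boolean spaces (with continuous maps), sending $L$ to $\Spec(L)$ equipped with the Stone topology and $X$ to $\Clop(X)$. Since the morphisms in our theorem, once we forget the enriched structure, are exactly Boolean homomorphisms and continuous maps, it suffices to lift this duality to the enriched layer: the operation $\cdot$ on one side, and the ternary relation $R$ on the other, and then to check that morphisms of the enriched structures match up.

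First I would verify that the two assignments $\cdot \mapsto R_{\cdot}$ and $R \mapsto \cdot_R$ land in the intended target category. For $\cdot \mapsto R_{\cdot}$, fix a prime filter $z$ and observe that $\{(x,y) : R_{\cdot}(x,y,z)\}$ equals $\bigcap_{a \cdot b \notin z} \bigl((X \setminus \widehat{a}) \times X \cup X \times (X \setminus \widehat{b})\bigr)$, where $\widehat{c}$ is the basic clopen of prime filters containing $c$; hence this fiber is closed. For basic clopens one uses primality of the filters and normal additivity of $\cdot$ to check $\widehat{a} \cdot_{R_{\cdot}} \widehat{b} = \widehat{a \cdot b}$; every clopen is a finite disjoint union of basic clopens, and normal additivity distributes the operation over the resulting finite joins, so the clopen closure condition holds. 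Conversely, $R \mapsto \cdot_R$ produces a normal additive operation essentially by inspection of the existential definition (finite unions commute with existential quantification, and $\bot$ corresponds to the empty clopen).

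Next I would verify the round trips. In the direction $(L,\cdot) \to (\Spec(L), R_{\cdot}) \to (\Clop(\Spec(L)), \cdot_{R_{\cdot}})$, the isomorphism $a \mapsto \widehat{a}$ is already classical, and the identity $\widehat{a} \cdot_{R_{\cdot}} \widehat{b} = \widehat{a \cdot b}$ computed above shows the operations match. In the other direction $(X,R) \to (\Clop(X), \cdot_R) \to (\Spec(\Clop(X)), R_{\cdot_R})$, the homeomorphism $z \mapsto u_z \coloneqq \{A \in \Clop(X) : z \in A\}$ is again classical, and one must check $R(x,y,z) \Leftrightarrow R_{\cdot_R}(u_x, u_y, u_z)$. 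The forward implication is immediate from the definition of $\cdot_R$. For the converse, suppose no witness exists, i.e.\ $(x,y) \notin \{(x',y') : R(x',y',z)\}$; since this fiber is closed and $X^2$ is compact Hausdorff with a basis of clopen rectangles, there exist clopens $A \ni x$, $B \ni y$ with $(A \times B) \cap \{(x',y') : R(x',y',z)\} = \emptyset$, i.e.\ $z \notin A \cdot_R B$, contradicting $u_x \lift{\cdot_R} u_y \subseteq u_z$.

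Finally, for the morphism correspondence: a $\cdot$-preserving Boolean homomorphism $h : L \to L'$ induces a continuous $f = h^{-1} : \Spec(L') \to \Spec(L)$; continuity and the forward implication $R'_{\cdot}(x,y,z) \Rightarrow R_{\cdot}(f(x),f(y),f(z))$ follow directly from $h$ being a $\cdot$-homomorphism. The back-and-forth clause is the delicate point: given $z$ and $x', y'$ with $R_{\cdot}(x', y', f(z))$, one extends the filter generated by $h(x')$ inside $L'$ to a prime filter lying above $z$ in the product of the $R'$-relation, using compactness of $\Spec(L')$ together with the closedness of the relevant fibers. Conversely, a bounded morphism $f$ clearly yields a Boolean homomorphism $f^{-1} : \Clop(Y) \to \Clop(X)$ on clopens, and the forward and back-and-forth clauses together force $f^{-1}(A \cdot B) = f^{-1}(A) \cdot f^{-1}(B)$ pointwise. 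The hard step will be precisely this back-and-forth verification: the definition of compatible ternary relation and of bounded morphism has been tailored exactly so that a compactness-plus-closed-fibers argument produces the required prime filter lift, and this is the only place where the topological content enters essentially rather than formally.
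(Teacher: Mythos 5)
Your outline is a correct reconstruction of the standard Jónsson--Tarski-style argument, but it takes a genuinely different route from the paper for the simple reason that the paper does not prove this theorem at all: its ``proof'' consists of deferring to the literature, stating that the result is obtained by specialising \cite{Hansoul} (equivalently \cite{Goldblatt}, \cite{GehrkeJonsson}) to the Boolean, binary-operation case, with the original ideas going back to \cite{JT1,JT2}. What your approach buys is a self-contained argument organised around classical Stone duality plus a ``lift the extra structure'' layer, and your key computations are sound: the closed-fibre formula $\{(x,y): R_{\cdot}(x,y,z)\} = \bigcap_{a\cdot b\notin z}\bigl(((X\setminus\widehat{a})\times X)\cup(X\times(X\setminus\widehat{b}))\bigr)$ is exactly right, and the compactness-plus-clopen-rectangles argument for the converse direction of $R(x,y,z)\Leftrightarrow R_{\cdot_R}(u_x,u_y,u_z)$ is the standard one. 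One caution: you present the identity $\widehat{a}\cdot_{R_{\cdot}}\widehat{b}=\widehat{a\cdot b}$ as a routine check via ``primality and normal additivity,'' but the inclusion $\widehat{a\cdot b}\subseteq\widehat{a}\cdot_{R_{\cdot}}\widehat{b}$ is precisely the Jónsson--Tarski prime filter extension lemma (given $a\cdot b\in z$, one must build prime filters $x\ni a$, $y\ni b$ with $x\lift{\cdot}y\subseteq z$ by a two-stage Zorn's lemma argument, using additivity to show the maximal filters obtained are prime); this is of the same difficulty as the back-and-forth clause for morphisms that you correctly single out as the hard step, and it deserves the same explicit treatment rather than being folded into the ``land in the target category'' verification. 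With that lemma supplied, your sketch fills the gap the paper deliberately leaves open.
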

\begin{proof}
We do not prove this here. Extending Stone duality to account for operators on a Boolean algebra was first done in \cite{JT1,JT2}. I have formed the above theorem by specialising the results of \cite{Hansoul}, which generalises the results of Jonsson and Tarski to distributive lattices with operators and spectral spaces with relational structure. In \cite{Goldblatt} and \cite{GehrkeJonsson} equivalent generalisations are obtained in terms of Priestley spaces with relational structure. In both references things are done in a fuller generality taking into account higher arity operators, other aspects of the duality are also considered. \cite{Dekkers} is a helpful reference on extended Stone duality for Boolean algebras which is more accessible in places. 
\end{proof}

\begin{definition}
If $\cdot:S^2 \rightarrow S$ is a binary operation on a set $S$, then for any $A,B \in \pow(S)$ we define $A \lift{\cdot} B \coloneqq \{a \cdot b:a \in A,b \in B\}$.
\end{definition}

\begin{definition}\label{DefFunctional}
Let $(L,\cdot)$ be a Boolean algebra with a normal additive binary operation. The extended Stone dual $(X,R_{\bullet})$ of $(L,\cdot)$ is called \textbf{functional} if for each $x,y \in X$ there is a unique $z \in X$ such that $x \lift{\cdot} y \subseteq z$. In this case we write $\times$ for the binary operation $\times:X^2 \rightarrow X$ which takes $(x,y) \in X^2$ to the unique $z \in X$ such that $x \lift{\cdot} y \subseteq z$. 
\end{definition}

\begin{theorem}[\cite{GGP} Theorem 6.1]\label{GGPThm}
The extended Stone dual $(X,R)$ of $(\pow_{\up}(\bb{N}),\lift{+})$\footnote{In \cite{GGP} the Boolean algebra of regular languages over a fixed alphabet $A$ is considered together with the two residuals of the concatenation of languages. We are only considering the case where $A$ is a singleton, and in this case a language can be identified with the set of lengths of words in that language. Moreover the operation of concatenation as well as the two residuals of concatenation are all easily seen to be definable from one another.} is functional, and $(X,\times)$ is the free profinite monoid on one generator. 
\end{theorem}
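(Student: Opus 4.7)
The plan is to compute the extended Stone dual $(X, R_{\lift{+}})$ explicitly, verify functionality, and then identify $(X, \times)$ with the free profinite monoid on one generator via its universal property. By \cref{SPECUPThm} and \cref{ThmTMFinSentences}, the map $\phi \mapsto \FSpec_{T_{\MSO(\Fin)}}(\phi)$ induces an isomorphism between the Lindenbaum algebra of $\lang{\msotimes}$-sentences modulo $T_{\MSO(\Fin)}$ and $\pow_{\up}(\bb{N})$. \Cref{ThmTMFinCompletions} then classifies the prime filters of $\pow_{\up}(\bb{N})$: the principal ultrafilters concentrated at each $n \in \bb{N}$ correspond to the completions $\Th(\MSO(n))$, while the non-principal prime filters correspond bijectively to residue functions $\tilde{r}$. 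Equipping the set of residue functions with its inverse-limit topology exhibits $X$ as $\bb{N} \cup \widehat{\bb{Z}}$, with $\bb{N}$ open and dense.

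Next I would verify functionality. For principal filters $x_n, x_m$, the singleton $\{n\} \lift{+} \{m\} = \{n+m\}$ forces $z = x_{n+m}$. For principal $x_n$ and non-principal $y$ with residue function $\tilde{r}$, each coset $\{md + \tilde{r}(d) : m \in \bb{N}\}$ belongs to $y$ and translates under $\{n\} \lift{+} (-)$ to $\{md + (n + \tilde{r}(d)) : m \in \bb{N}\}$, which uniquely determines the residue function of $z$. The case of two non-principal prime filters is exactly \cref{RmkResidueAddition} read off via the $\FSpec$ isomorphism: the unique $z$ has residue function $\tilde{r}_1 \msotimes \tilde{r}_2$. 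In each case $z$ is uniquely determined, so by \cref{DefFunctional} the dual is functional and we obtain a binary operation $\times$ on $X$.

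Finally I would match $(X, \times)$ with the free profinite monoid on one generator. By construction $\times$ restricts to $+$ on the dense subset $\bb{N}$, and $\times$ is continuous because $R_{\lift{+}}$ is compatible. The universal property reduces to showing that every monoid homomorphism from $\bb{N}$ (free on one generator) to a finite monoid $M$ extends uniquely and continuously to $X \to M$. Every finite one-generator monoid is the quotient of $\bb{N}$ by a congruence of the form ``$n \sim m$ iff $n = m$, or $n, m \geq N$ and $n \equiv m \pmod{d}$'', so preimages of elements of $M$ are ultimately periodic subsets of $\bb{N}$, hence clopen in $X$, and the extension is then forced by continuity and density of $\bb{N}$. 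The main obstacle will be this third step: although the heavy lifting is done upstream, one must still check carefully that the clopen base of $X$ coincides with the base arising from finite one-generator monoid quotients, which is exactly the content of \cref{SPECUPThm} viewed through the extended Stone duality functor of \cref{ExtStoneDuality}.
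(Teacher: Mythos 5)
The paper does not actually prove this statement: it is imported verbatim as Theorem 6.1 of \cite{GGP}, and the surrounding development (\cref{PropFSPECIso}, \cref{CorLTdualFPM}, \cref{ThmS0isFPM}) treats it as a black box. Your proposal therefore takes a genuinely different route --- you supply an independent proof by pulling the paper's model-theoretic classification back along the $\FSpec$ isomorphism: the dual space is computed explicitly as $\bb{N}\sqcup\widehat{\bb{Z}}$ (principal ultrafilters versus residue functions, which is \cref{ThmTMFinCompletions} in disguise), functionality is checked case by case, and the universal property is verified against the threshold-period quotients of $\bb{N}$. This is essentially sound and has the virtue of making the whole paper self-contained, at the cost of reproving by hand a special case of a result that \cite{GGP} establishes for arbitrary finite alphabets. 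Two points need shoring up. First, in the non-principal/non-principal case you lean on \cref{RmkResidueAddition}, which the paper states without proof and which concerns the operation $\msotimes$ on completions rather than the relation dual to $\lift{+}$; invoking it here quietly presupposes the very identification you are trying to establish. It is cleaner (and easy) to argue directly: if $S\in x$ eventually contains the class $\tilde{r}_1(d)\bmod d$ and $T\in y$ eventually contains $\tilde{r}_2(d')\bmod d'$, then $S+T$ eventually contains the class $\tilde{r}_1(g)+\tilde{r}_2(g)\bmod g$ with $g=\gcd(d,d')$, which pins down the unique prime filter above $x\lift{+}y$. Second, the final step should also record that $x_0$ is the identity, that $\times$ is associative and jointly continuous (which does follow from functionality plus compatibility, but deserves a sentence), and that the extension of a homomorphism $\bb{N}\to M$ is itself a homomorphism --- this follows from density of $\bb{N}$ and continuity of $\times$, but is not literally "forced by continuity" alone.
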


Now that the context has been set up, we will establish the connection between the free profinite monoid on one generator and the pseudofinite monadic second order theory of linear order that we have analysed in \cref{SecAxioms,SecCompletions}.  

\begin{definition}[Definition of Lindenbaum-Tarski algebra and space of $0$-types.]
Let $\lang{}$ be a first order language, $T$ an $\lang{}$-theory. 

The \textbf{Lindenbaum-Tarski algebra} $\LT(T)$ is the Boolean algebra comprising equivalence classes of $\lang{}$-sentences with respect to the equivalence relation $\phi \sim \psi$ given by,
\[
T \models \phi \leftrightarrow \psi,
\]
with the operations being the usual Boolean logical connectives. 

The \textbf{space of $0$-types} of $T$, $S_0(T)$, is the Boolean space with underlying set,
\[
\{T':T' \text{ complete and }T' \models T\},
\]
and basis of clopen sets,
\[
\dangle{\phi} \coloneqq \{T':T' \models T \cup \{\phi\} \},
\] 
one for each $\lang{}$-sentence $\phi$.
\end{definition}

\begin{remark}
It is well known that given a first order theory $T$, the Lindenbaum-Tarski algebra $\LT(T)$and the space of $0$-types $S_0(T)$ are Stone duals of one another. A complete theory extending $T$ and a prime filter on $\LT$ are virtually defined in the same way.
\end{remark}

\begin{notation}
Going forward, since we will only be concerned with $T_{\MSO(\Fin)}$ we will write $\LT$ as shorthand for $\LT(T_{\MSO(\Fin)})$ and $S_0$ as shorthand for $S_0(T_{\MSO(\Fin)})$.
\end{notation}

\begin{definition}[Definition of $+$ on the Lindenbaum-Tarski algebra of $T_{\MSO(\Fin)}$.]
Define an operation $+:\LT^2 \rightarrow \LT$ as follows, given $\phi,\psi \in \LT$ take $\phi + \psi$ to be the (equivalence class of the) sentence,
\[
\eta: \exists X (\dwcl(X) \wedge \phi^X \wedge \psi^{X^c}).
\]
See \cref{ResRelRemark} for unpacking of the notation used, in particular the definition of the formula $\dwcl(X)$.
\end{definition}

\begin{proposition}\label{PropFSPECIso}
The map $\FSpec:(\LT,+) \rightarrow (\pow_{\up}(\bb{N}),\lift{+})$,  which we defined in \cref{FSpecDef}, is an isomorphism in the category of Boolean algebras equipped with a normal additive binary operation.
\end{proposition}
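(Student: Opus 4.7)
The plan is to verify, in order, that $\FSpec$ descends to a well-defined map on $\LT$ whose image lies in $\pow_{\up}(\bb{N})$; that it preserves the Boolean operations; that it sends $+$ to $\lift{+}$; and that it is a bijection. Well-definedness is immediate from the observation that each $\MSO(n)$ is a model of $T_{\MSO(\Fin)}$, so $T_{\MSO(\Fin)}$-equivalent sentences have the same spectrum, while \cref{SPECUPThm} places the image in $\pow_{\up}(\bb{N})$. Preservation of $\vee,\wedge,\neg,\top,\bot$ reduces to the pointwise observation that for each fixed $n$ the map from $\LT$ to $\{0,1\}$ given by evaluation in $\MSO(n)$ is a Boolean morphism.

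The heart of the matter is preservation of the binary operation. Here I would unpack $\phi+\psi = \exists X(\dwcl(X)\wedge\phi^X\wedge\psi^{X^c})$ via \cref{ResRelRemark}, reducing $\MSO(n)\models\phi+\psi$ to: there is $A\in\MSO(n)$ with the appropriate closure property satisfying $\MSO(n)\upharpoonright A\models\phi$ and $\MSO(n)\upharpoonright A^c\models\psi$. In the standard model $\MSO(n)$ any such $A$ picks out a segmentation of the atoms $\{0,\ldots,n-1\}$ into two successive blocks of sizes $a$ and $b$ with $a+b=n$; by \cref{PlusExtendsStandard} the two restricted structures are then isomorphic to $\MSO(a)$ and $\MSO(b)$. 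So $n\in\FSpec(\phi+\psi)$ iff $n = a+b$ for some $a\in\FSpec(\phi)$ and $b\in\FSpec(\psi)$, yielding exactly $\FSpec(\phi+\psi) = \FSpec(\phi)\lift{+}\FSpec(\psi)$.

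Injectivity then falls straight out of \cref{TMFinMainTheorem}: if $\FSpec(\phi) = \FSpec(\psi)$ then $\MSO(n)\models\phi\leftrightarrow\psi$ for every $n$, hence $T_{\MSO(\Fin)}\models\phi\leftrightarrow\psi$ and $[\phi]=[\psi]$ in $\LT$. For surjectivity, given ultimately periodic $S\subseteq\bb{N}$ with threshold $N$ and period $d$, I would take $\mfrak{i} = S\cap\{0,\ldots,N\}$ and $\mfrak{r}\subseteq\{1,\ldots,d\}$ to be the set of residues modulo $d$ hit by $S\cap\{n>N\}$, and form $\phi := \bigvee_{i\in\mfrak{i}}\psi_{=i} \vee (\psi_{>N}\wedge\bigvee_{h\in\mfrak{r}}\rho_{d,h})$; by \cref{ThmTMFinSentences} the resulting sentence has spectrum exactly $S$, and indeed every element of $\LT$ is equivalent to one of this shape. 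The one step demanding genuine care is preservation of $+$, where the syntactic form of $\phi+\psi$ has to be matched to concatenation of finite linear orders through \cref{PlusExtendsStandard}; the remaining clauses are direct appeals to results from \cref{SecAxioms,SecCompletions}.
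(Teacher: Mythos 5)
Your proof is correct and follows essentially the same route as the paper's: well-definedness and preservation of the Boolean operations are handled pointwise via evaluation in the standard models, injectivity via \cref{TMFinMainTheorem}, and the key computation $\FSpec(\phi+\psi)=\FSpec(\phi)\lift{+}\FSpec(\psi)$ by identifying downward-closed elements of $\MSO(n)$ with initial segments so that the restrictions become $\MSO(a)$ and $\MSO(b)$ with $a+b=n$. You additionally spell out the surjectivity construction (via $\psi_{=i}$ and $\rho_{d,h}$, as in \cref{ThmTMFinSentences}) that the paper leaves as an exercise.
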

\begin{proof}
Recall that in \Cref{SPECUPThm} we said that $\FSpec(\phi)$ is ultimately periodic for each $\phi \in \LT$, so we have a well defined function. It is straightforward to show that this map preserves joins, meets, and complements. The map is injective by \cref{TMFinMainTheorem}, for if $n \in \FSpec(\phi) \wo \FSpec(\psi)$ then $\MSO(n) \models \phi \wedge \neg \psi$ and therefore $T_{\MSO(\Fin)} \not\models \phi \leftrightarrow \psi$. Surjectivity can be established directly, given $A \in \pow_{\up}(\bb{N})$ it is left as an exercise to produce $\phi_A \in \LT$ such that $\FSpec(\phi_A) = A$.

All that remains then is to show that for any $\phi,\psi \in \LT$ we have,
\[
\FSpec(\phi + \psi) = \FSpec(\phi) \lift{+} \FSpec(\psi).
\]
Let $\eta = \phi + \psi$. Now take $n \in \FSpec(\phi + \psi)$, i.e. suppose $\MSO(n) \models \eta$. This means that,
\[
\MSO(n) \models \exists X (\dwcl(X) \wedge \phi^X \wedge \psi^{X^c}),
\]
say $\MSO(n) \models \dwcl(A) \wedge \phi^A \wedge \psi^{A^c}$. Then clearly $A$ is of the form $\{0,\ldots,i-1\}$ for some $i \leq n$. We therefore get $i \in \FSpec(\phi)$ and $n-i \in \FSpec(\psi)$, so $n \in \FSpec(\phi) \lift{+} \FSpec(\psi)$.

Conversely, let $i \in \FSpec(\phi)$ and $j \in \FSpec(\psi)$ be given. Then it is immediate that $\MSO(i+j) \models \eta$, as taking $A = \{0,\ldots,i\}$ we have,
\[
\MSO(i+j) \models \phi^A \wedge \psi^{A^c}.
\]
So $\FSpec(\phi + \psi) = \FSpec(\phi) \lift{+} \FSpec(\psi)$ as required.
\end{proof}

The following now falls out of \cref{GGPThm,PropFSPECIso}.

\begin{corollary}\label{CorLTdualFPM}
The extended Stone dual of $(\LT,+)$ is the free profinite monoid on one generator. 
\end{corollary}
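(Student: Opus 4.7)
The plan is to simply combine the two cited results via the functoriality of extended Stone duality. By Proposition~\ref{PropFSPECIso}, the map $\FSpec$ is an isomorphism from $(\LT,+)$ to $(\pow_{\up}(\bb{N}),\lift{+})$ in the category of Boolean algebras equipped with a normal additive binary operation, which is one of the two categories appearing in Theorem~\ref{ExtStoneDuality}. Since that theorem asserts a (contravariant) equivalence of categories between Boolean algebras with a normal additive binary operation and Boolean spaces with a compatible ternary relation, the functor witnessing this equivalence sends $\FSpec$ to an isomorphism between the extended Stone duals of the two structures.

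Thus the extended Stone dual of $(\LT,+)$ is isomorphic, as a Boolean space equipped with a compatible ternary relation, to the extended Stone dual of $(\pow_{\up}(\bb{N}),\lift{+})$. By Theorem~\ref{GGPThm} the latter is functional (in the sense of \cref{DefFunctional}) and, equipped with the resulting binary operation $\times$, is the free profinite monoid on one generator. Transporting along the isomorphism of dual spaces, the extended Stone dual of $(\LT,+)$ inherits functionality and the induced binary operation makes it isomorphic, in the appropriate category of Boolean spaces with binary operations (equivalently, of profinite monoids), to the free profinite monoid on one generator.

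There is no real obstacle: the only thing that must be checked is that the notion of isomorphism from Theorem~\ref{ExtStoneDuality} on the side of Boolean algebras with a normal additive binary operation really is the notion of isomorphism preserved by $\FSpec$, which is already contained in Proposition~\ref{PropFSPECIso}, and that functionality is preserved by isomorphism of the dual structures, which is immediate from the definition since it is a purely relational property. Hence the corollary follows in one line once these two preceding results are in hand.
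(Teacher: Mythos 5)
Your argument is correct and is essentially the paper's own: the paper simply notes that the corollary ``falls out of'' \cref{GGPThm,PropFSPECIso}, and your write-up just makes explicit the (routine) step that the duality functor of \cref{ExtStoneDuality} transports the isomorphism $\FSpec$ to an isomorphism of the dual spaces. Nothing further is needed.
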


\begin{remark}
It follows immediately from \cref{CorLTdualFPM} that the extended Stone dual of $(\LT,+)$ is functional (this is an essential part of what is proved by Gehrke, Grigorieff, and Pin to establish \cref{GGPThm}). We include a direct proof of this in the proof of the following statement (the proof that $x \msotimes y \subseteq \dangle{x \lift{+} y}$) as it is illustrative of how the connection with model-theory can allow for a different perspective on the structures involved. 
\end{remark}

\begin{theorem}\label{ThmS0dualLT}
$(S_0,\msotimes)$ is the extended Stone dual of $(\LT,+)$. 
\end{theorem}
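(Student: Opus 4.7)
The claim is that applying the functor of Theorem 4.2 to $(\LT,+)$ produces $(S_0, \msotimes)$, meaning that the ternary relation
\[
R_+(x,y,z) \iff x \lift{+} y \subseteq z
\]
obtained from $+$ coincides with the graph of $\msotimes$. Since the Boolean space part is already $S_0$ by standard Stone duality (a complete $T_{\MSO(\Fin)}$-extension is exactly a prime filter on $\LT$), the theorem reduces to the biconditional $x \lift{+} y \subseteq z \iff z = x \msotimes y$. My plan is to prove the inclusion $x \lift{+} y \subseteq x \msotimes y$ directly from the definition of $+$ on $\LT$ and the construction in Definition 2.16, and then to conclude uniqueness of $z$ (equivalently, functionality of $R_+$) by appealing to Corollary 4.6.

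For the inclusion, fix $x, y \in S_0$, $\phi \in x$ and $\psi \in y$. Choose models $M \models x$ and $N \models y$; by Corollary 2.19 we have $x \msotimes y = \Th(M \msotimes N)$, so it suffices to verify
\[
M \msotimes N \models \exists X\bigl(\dwcl(X) \wedge \phi^X \wedge \psi^{X^c}\bigr).
\]
The natural witness is $X := (\top_M, \bot_N)$. Unwinding Definition 2.16, the atoms of $M \msotimes N$ split into a first copy $\{(a,\bot_N) : a \in \At(M)\}$ and a second copy $\{(\bot_M, b) : b \in \At(N)\}$, and every first-copy atom is strictly $\exle$-below every second-copy atom. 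This makes $X$ an initial segment of the atoms (hence $\dwcl$) with $X^c = (\bot_M, \top_N)$. The restrictions $M \msotimes N \upharpoonright X$ and $M \msotimes N \upharpoonright X^c$ are manifestly isomorphic to $M$ and $N$ respectively via the projections, so Proposition 2.9 yields $M \msotimes N \models \phi^X \wedge \psi^{X^c}$, as required.

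For uniqueness, the quickest route is Corollary 4.6 itself: it identifies the extended Stone dual of $(\LT, +)$ with the free profinite monoid on one generator, whose dual relation is functional in the sense of Definition 4.4, so there is at most one $z$ with $x \lift{+} y \subseteq z$. Combined with the inclusion above, this pins down $z = x \msotimes y$. The main obstacle is philosophical rather than computational: one must either be content invoking Corollary 4.6 for functionality, or give a self-contained argument. The latter can be assembled from Theorem 3.10 and Remark 3.12: any sentence reduces mod $T_{\MSO(\Fin)}$ to a Boolean combination of the $\psi_{=n}$'s and $\rho_{d,h}$'s, and the value of $\msotimes$ on these primitives is determined by ordinary addition and modular arithmetic, so any two $z$'s containing $x \lift{+} y$ must agree on each primitive and therefore coincide. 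Modulo this uniqueness step, the theorem is essentially a definitional unpacking, with the substantive content being the witness $(\top_M, \bot_N)$.
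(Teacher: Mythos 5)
Your first half---establishing $x \lift{+} y \subseteq x \msotimes y$ via the witness $(\top_M,\bot_N)$, checking it is downwards closed, and applying relativisation to the two projections---is exactly the paper's argument, and it is correct. Where you genuinely diverge is the uniqueness step. The paper does not invoke \cref{CorLTdualFPM} for functionality; instead it proves the reverse inclusion $x \msotimes y \subseteq \dangle{x \lift{+} y}$ directly: given $\eta \in x \msotimes y$, assumed unnested of quantifier rank $k$, the Fra\"isse--Hintikka theorem together with \cref{SumEFThm} shows that $\eta$ is $T_{\MSO(\Fin)}$-equivalent to a finite disjunction of sentences $\phi + \psi$ with $\phi,\psi$ axiomatising $\approx_k$-classes, at least one disjunct of which lies in $x \lift{+} y$; since prime filters of a Boolean algebra are maximal, $x \msotimes y$ is then the unique prime filter containing $x \lift{+} y$. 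Your shortcut through \cref{CorLTdualFPM} is logically sound and not circular (that corollary rests only on \cref{GGPThm,PropFSPECIso}), and indeed the remark immediately preceding the theorem concedes that functionality ``follows immediately'' from it; what you lose is that the direct reverse inclusion is the substantive model-theoretic content the paper wants to exhibit---your version outsources it to the Gehrke--Grigorieff--Pin theorem, whereas the paper's version gives an independent, self-contained proof of functionality. Your sketched fallback via \cref{ThmTMFinSentences} and \cref{RmkResidueAddition} is also viable in principle, but be aware that \cref{RmkResidueAddition} is itself asserted without proof in the paper (and the standard completions $\Th(\MSO(n))$ need separate handling there), so it is less of a free lunch than it appears; if you want a self-contained uniqueness argument, the Ehrenfeucht--Fra\"isse route is the cleaner one.
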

\begin{proof}
By definition the extended Stone dual of $(\LT,+)$ is $(S_0,R_+)$ where $R_+$ is the ternary relation on $S_0$ is given by,
\[
R_+(x,y,z) \Leftrightarrow x \lift{+} y \subseteq z.
\]
We will show that $\dangle{x \lift{+} y}\footnote{Recall that if $L$ is a lattice and $S \subseteq L$ is a subset then $\dangle{S}$ is the filter generated by $S$, i.e. $\dangle{S} = \bigcup_{n \in \bb{N}} \{a \in L: a \geq \bigwedge_{i=1}^n s_i$ for some $s_1,\ldots,s_n \in S\}$.} = x \msotimes y$. Recall that over a Boolean algebra prime filters are maximal filters (i.e. ultrafilters) and therefore we can never have a proper inclusion of prime filters of a Boolean algebra. Hence from $x \lift{+} y = x \msotimes y$ we can deduce that $x \msotimes y$ is the unique prime filter containing $x \lift{+} y$. 

Firstly we show $x \lift{+} y \subseteq x \msotimes y$. Let $\phi \in x$ and $\psi \in y$ be given. By definition $\phi + \psi$ is the sentence $\exists X (\dwcl(X) \wedge \phi^X \wedge \psi^{X^c}) \in \LT$. But now recall that $x \msotimes y$ is the theory of $M \msotimes N$ for any $M \models x$ and $N \models y$ (see \cref{TMFinSumCor}). Then consider the element $A = (\top,\bot) \in M \msotimes N$. This element $A$ is clearly downwards closed, $M \msotimes N \upharpoonright A \models \phi$, and $M \msotimes N \upharpoonright A^c \models \psi$. Therefore $M \msotimes N \models \phi + \psi$, and hence $\phi + \psi \in x \msotimes y$.

To finish off we show that $x \msotimes y \subseteq \dangle{x \lift{+} y}$. We will in fact show that each sentence $\eta \in x \msotimes y$ is equivalent over $T_{\MSO(\Fin)}$ to a finite disjunction of sentences contained in $x \lift{+} y$. For this we will use the Fra\"isse-Hintikka theorem (see \cite{HodgesBible} Theorem 3.3.2) in a similar way as we did in \cref{TMFinEFStandardThm}. Take $\eta \in x \msotimes y$, this means there are $M \models x$ and $N \models y$ such that $M \msotimes N \models \eta$. We want to show that there exist $m \in \bb{N}$, $\phi_1,\ldots,\phi_m$, and $\psi_1,\ldots,\psi_m$ such that for $i=1,\ldots,m$,
\[
T_{\MSO(\Fin)} \models \eta \leftrightarrow \bigvee_{i=1}^m (\phi_i + \psi_i).
\] 
Without loss of generality\footnote{See \cite{HodgesBible} Corollary 2.6.1 pp 59.} we can assume that $\eta$ is unnested of quantifier rank $k \in \bb{N}$. The equivalence relation $\approx_k$ has finitely many equivalence classes, each of which is axiomatised by an $\lang{\msotimes}$-sentence. By \cref{SumEFThm}, $\msotimes$ is a well defined operation on the set of equivalence classes for $\approx_k$. Now take the finite disjunction of all sentences of the form $\phi + \psi$ where $\phi,\psi$ each axiomatise equivalence classes of $\approx_k$ which $\msotimes$ sends to an equivalence class of $\approx_k$ containing a model of $\eta$. This disjunction is equivalent to $\eta$ over $T_{\MSO(\Fin)}$ so we are done. 
\end{proof}

\begin{theorem}\label{ThmS0isFPM}
$(S_0,\msotimes)$ is the free profinite monoid on one generator.
\end{theorem}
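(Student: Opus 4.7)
The plan is to directly combine \cref{ThmS0dualLT} and \cref{CorLTdualFPM}. \Cref{ThmS0dualLT} shows that $(S_0,\msotimes)$ is the extended Stone dual of $(\LT,+)$, with the binary operation $\msotimes$ precisely encoding the functional ternary relation $R_+$ dual to $+$. \Cref{CorLTdualFPM} in turn identifies this extended Stone dual with the free profinite monoid on one generator. Composing these two identifications yields the desired conclusion.

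To spell this out a little more, \cref{CorLTdualFPM} was itself obtained by transferring \cref{GGPThm} across the Boolean-algebra-with-operator isomorphism $\FSpec$ of \cref{PropFSPECIso}. So one can also describe the argument as a chain: Theorem~6.1 of \cite{GGP} identifies the extended Stone dual of $(\pow_{\up}(\bb{N}),\lift{+})$ with the free profinite monoid on one generator; transporting along $\FSpec$ yields a description of the extended Stone dual of $(\LT,+)$; and \cref{ThmS0dualLT} realises this dual concretely as $(S_0,\msotimes)$.

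The functionality of the ternary relation on $S_0$ was already verified in the proof of \cref{ThmS0dualLT} (via the computation $\dangle{x \lift{+} y} = x \msotimes y$, which shows that there is a unique prime filter of $\LT$ containing $x \lift{+} y$, namely $x \msotimes y$), so the passage from a compatible ternary relation to a binary monoid operation requires no further work. In short, there is no substantive obstacle remaining at this stage: the theorem is a formal consequence of \cref{ThmS0dualLT,CorLTdualFPM}, and the proof amounts to little more than assembling those two statements.
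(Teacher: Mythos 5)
Your proposal is correct and matches the paper's proof exactly: the theorem is obtained by combining \cref{CorLTdualFPM} and \cref{ThmS0dualLT}, with the functionality of the dual relation already secured in the proof of \cref{ThmS0dualLT}. The additional unwinding through \cref{GGPThm} and \cref{PropFSPECIso} is accurate but not needed beyond what those cited results already provide.
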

\begin{proof}
Combine \cref{CorLTdualFPM,ThmS0dualLT}.
\end{proof}

Now the reader is invited to think about \cref{ThmS0isFPM} and \cref{RmkResidueAddition} together. 
\clearpage
\printbibliography[heading=bibintoc]

\textit{E-mail address}: \texttt{deacon.linkhorn@manchester.ac.uk}\\
\textit{Personal website}: \url{http://www.deaconlinkhorn.com}
\end{document}